\theoremstyle{plain}
\newtheorem{theorem}{Theorem}[section]
\newtheorem{proposition}[theorem]{Proposition}
\newtheorem{lemma}[theorem]{Lemma}
\theoremstyle{definition}
\newtheorem{definition}[theorem]{Definition}
\newtheorem{remark}[theorem]{Remark}
\newtheorem{example}[theorem]{Example}
\numberwithin{equation}{section}
\let\c@theorem\c@table
\newcommand{\bb}{\mathbf{b}}
\newcommand{\br}{\mathbf{r}}
\newcommand{\bs}{\mathbf{s}}
\newcommand{\bu}{\mathbf{u}}
\newcommand{\bv}{\mathbf{v}}
\newcommand{\bw}{\mathbf{w}}
\newcommand{\bZ}{\mathbf{0}}
\newcommand{\cG}{\mathcal{G}}
\newcommand{\cK}{\mathcal{K}}
\newcommand{\cP}{\mathcal{P}}
\newcommand{\col}{\mathop{\mathrm{col}}}
\newcommand{\diag}{\mathop{\mathrm{diag}}\nolimits}
\newcommand{\Id}{\mathrm{Id}}
\newcommand{\onesymb}{\mathbf{1}}
\newcommand{\one}[1]{\onesymb_{#1}}
\newcommand{\stratumsymb}{\mathcal{S}}
\newcommand{\C}{\mathbb{C}}
\newcommand{\F}{\mathbb{F}}
\newcommand{\R}{\mathbb{R}}
\newcommand{\Z}{\mathbb{Z}}
\newcommand{\tu}[1]{\textup{#1}}
\begin{document}

\title{Simultaneous kernels of matrix Hadamard powers}

\author{Alexander Belton}
\address[A.~Belton]{Department of Mathematics and Statistics, Lancaster
University, Lancaster, UK}
\email{\tt a.belton@lancaster.ac.uk}

\author{Dominique Guillot}
\address[D.~Guillot]{University of Delaware, Newark, DE, USA}
\email{\tt dguillot@udel.edu}

\author{Apoorva Khare}
\address[A.~Khare]{Indian Institute of Science;
Analysis and Probability Research Group; Bangalore, India}
\email{\tt khare@iisc.ac.in}

\author{Mihai Putinar}
\address[M.~Putinar]{University of California at Santa Barbara, CA,
USA and Newcastle University, Newcastle upon Tyne, UK} 
\email{\tt mputinar@math.ucsb.edu, mihai.putinar@ncl.ac.uk}

\date{\today}

\subjclass[2010]{15B48 (primary); 15A21 (secondary)}

\keywords{Positive semidefinite matrix, 
Hadamard product,
Schubert cell-type stratification,
principal minor positive,
principal submatrix rank property,
simultaneous kernel}

\begin{abstract}
In previous work [\textit{Adv.~Math.}~298:325--368, 2016], the structure
of the simultaneous kernels of Hadamard powers of any positive
semidefinite matrix were described. Key ingredients in the proof included
a novel stratification of the cone of positive semidefinite matrices and
a well-known theorem of Hershkowitz, Neumann, and Schneider, which
classifies the Hermitian positive semidefinite matrices whose entries are
$0$ or $1$ in modulus. In this paper, we show that each of these results
extends to a larger class of matrices which we term $3$-PMP (principal
minor positive).
\end{abstract}

\maketitle

\section{Introduction}

Given a positive integer $N$ and a subset $I \subset \C$,
let $\cP_N(I)$ denote the collection of $N \times N$ Hermitian
positive semidefinite matrices with all entries in $I$. Motivated by
the study of entrywise transformations of a matrix which preserve
positivity, the authors computed in \cite{BGKP-fixeddim} the
simultaneous kernel $\cK( A )$ of Hadamard powers
of a matrix $A = ( a_{ij} ) \in \cP_N(\C)$: that is,
\begin{equation}\label{Esimul}
\cK( A ) := \bigcap_{n \geq 0} \ker A^{\circ n}, 
\end{equation}
where $A^{\circ n} := ( a_{ij}^n )$. Here, we use the convention that
$A^{\circ 0} := \one{N}$, the rank-one $N \times N$ matrix with all
entries equal to $1$. Note that when the entrywise powers are replaced
by the conventional matrix powers and when $A$ is Hermitian, the kernels
$\ker A^n$ are all equal to $\ker A$ for $n \geq 1$. In contrast, the
simultaneous kernels of Hadamard powers are highly structured.

\begin{theorem}[{see \cite[Theorem~5.7]{BGKP-fixeddim}}]\label{Tsimul}
Let $A \in \cP_N( \C ) \setminus \{0\}$ and let $c_0, \dots, c_{N-1} \in
(0, \infty)$. Then 
\[
\cK( A ) = \ker \left(\sum_{j = 0}^{N - 1} c_j A^{\circ j}\right)
= \bigcap_{n=0}^{N-1} \ker A^{\circ n} = 
\bigoplus_{j = 1}^m \ker \one{I_j}
\] 
where $\pi^{\{1\}}(A) := \{I_1, \dots, I_m\}$ is a partition of $\{1,
\dots, N\}$ whose construction is described in Section~\ref{Sstrat}.
\end{theorem}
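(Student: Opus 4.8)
The plan is to translate the statement into a Gram-matrix factorization of $A$ and then run a Vandermonde argument; that only the powers $A^{\circ 0}, \dots, A^{\circ (N-1)}$ are needed will drop out of a dimension count. First dispose of the two middle equalities, which are soft. By the Schur product theorem each $A^{\circ j}$ is positive semidefinite (for $j = 0$ this says $A^{\circ 0} = \one{N} \succeq 0$), so $B := \sum_{j=0}^{N-1} c_j A^{\circ j}$ is a positive linear combination of positive semidefinite matrices. Since $v^* B v = \sum_{j=0}^{N-1} c_j\, v^* A^{\circ j} v$ is then a sum of nonnegative terms with positive coefficients, and since $M v = 0 \iff v^* M v = 0$ for positive semidefinite $M$, it follows that $\ker B = \bigcap_{j=0}^{N-1} \ker A^{\circ j}$. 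As $\cK(A) \subseteq \bigcap_{n=0}^{N-1} \ker A^{\circ n}$ is clear, the theorem reduces to the reverse inclusion together with the identification of the common subspace with $\bigoplus_{j=1}^m \ker \one{I_j}$.

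For this, factor $A = U^* U$ with $U$ an $r \times N$ matrix, $r := \operatorname{rank} A$, and let $u_1, \dots, u_N \in \C^r$ be its columns, so that $a_{ij} = \langle u_j, u_i\rangle$ and hence $(A^{\circ n})_{ij} = a_{ij}^n = \langle u_j^{\otimes n}, u_i^{\otimes n}\rangle$. Then $v^* A^{\circ n} v = \bigl\| \sum_{j=1}^N v_j\, u_j^{\otimes n} \bigr\|^2$, so $v \in \ker A^{\circ n}$ precisely when $\sum_{j=1}^N v_j\, u_j^{\otimes n} = 0$ in $(\C^r)^{\otimes n}$. Let $I_1, \dots, I_m$ be the partition of $\{1, \dots, N\}$ into maximal blocks on which the vector $u_j$ is constant — equivalently, on which the $j$-th column of $A$ is constant — let $w_1, \dots, w_m$ ($m \le N$) be the distinct values taken, and write $\sigma_\ell(v) := \sum_{j \in I_\ell} v_j$, so that $\sum_{j} v_j u_j^{\otimes n} = \sum_{\ell=1}^m \sigma_\ell(v)\, w_\ell^{\otimes n}$. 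The key fact is that the relations $\sum_{\ell=1}^m s_\ell\, w_\ell^{\otimes n} = 0$, for $0 \le n \le m-1$, already force $s_1 = \cdots = s_m = 0$: choosing $c \in \C^r$ so that the scalars $\langle w_\ell, c\rangle$ are pairwise distinct — possible since the $w_\ell$ are distinct, so the exceptional $c$ lie in a finite union of proper linear subspaces of $\C^r$ — and pairing the relations against $c^{\otimes n}$ gives $\sum_{\ell=1}^m s_\ell \langle w_\ell, c\rangle^n = 0$ for $0 \le n \le m-1$, a Vandermonde system with distinct nodes. Granting this: since $N - 1 \ge m - 1$, any $v \in \bigcap_{n=0}^{N-1} \ker A^{\circ n}$ satisfies $\sum_{\ell} \sigma_\ell(v)\, w_\ell^{\otimes n} = \sum_j v_j u_j^{\otimes n} = 0$ for $0 \le n \le m-1$, so all $\sigma_\ell(v) = 0$; conversely if all $\sigma_\ell(v) = 0$ then $\sum_j v_j u_j^{\otimes n} = 0$ for every $n \ge 0$, so $v \in \cK(A)$. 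Thus $\bigcap_{n=0}^{N-1} \ker A^{\circ n} \subseteq \{\,v : \sigma_\ell(v) = 0\ \text{for all}\ \ell\,\} \subseteq \cK(A) \subseteq \bigcap_{n=0}^{N-1} \ker A^{\circ n}$, so all four subspaces in the theorem coincide, the common value being $\{\,v : \sum_{j \in I_\ell} v_j = 0\ \text{for all}\ \ell\,\} = \bigoplus_{\ell=1}^m \ker \one{I_\ell}$.

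The remaining loose end is to check that the partition $\{I_1, \dots, I_m\}$ just produced — indices grouped by coincidence of the corresponding columns of $A$ — is exactly $\pi^{\{1\}}(A)$ as constructed in Section~\ref{Sstrat}; this is a matter of comparing definitions and should be routine. I expect the genuine content to lie in the Vandermonde/injectivity step, that is, in the statement that $A^{\circ 0}, \dots, A^{\circ (N-1)}$ already detect the full simultaneous kernel. The truncation index $N - 1$ is in fact sharp: if $A$ is the Gram matrix of the $N$ distinct vectors $(1, t_j) \in \C^2$ with the $t_j$ pairwise distinct, then $\cK(A) = \{0\}$ while $\bigcap_{n=0}^{N-2} \ker A^{\circ n}$ is one-dimensional (a Vandermonde count), so no smaller index works.
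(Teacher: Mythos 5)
Your proof is correct, but it follows a genuinely different route from the one the paper takes (both in \cite{BGKP-fixeddim} and in the streamlined argument for Theorem~\ref{T3pmp}, which contains Theorem~\ref{Tsimul} as the positive semidefinite case). The paper works with the \emph{entries} of $A$: it reduces the intersection $\bigcap_{n\ge 0}$ to $\bigcap_{n=0}^{N-1}$ via the polynomial identity $A^{\circ M}=\sum_{j=0}^{N-1}D_{M,j}(A)A^{\circ j}$ from \cite[Lemma~3.5]{BGKP-fixeddim}, compresses $A$ to the $m\times m$ matrix $B=\Sigma^\downarrow_\pi(A)$ (summing $v$ over blocks), proves $b_{ii}\ne b_{ij}$ for $i<j$ using a $3$-PMP inequality, and then runs a Vandermonde/induction on the \emph{first row} of $B$. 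You instead work with a Gram factorization $A=U^*U$: the identity $v^*A^{\circ n}v=\|\sum_j v_j u_j^{\otimes n}\|^2$ converts membership in $\ker A^{\circ n}$ into a tensor identity, the blocks of $\pi^{\{1\}}(A)$ arise as coincidence classes of the Gram vectors $u_j$, and the Vandermonde argument is carried out after pairing with a generic $c^{\otimes n}$. Your route avoids both \cite[Lemma~3.5]{BGKP-fixeddim} and the compression operator, is self-contained, and lets you exhibit sharpness of the truncation at $N-1$ cleanly; the trade-off is that it is intrinsically tied to positive semidefiniteness (you need $A^{\circ n}\succeq 0$ and the Gram decomposition throughout), so unlike the paper's entrywise argument it does not extend to the $3$-PMP setting that is the main point of the paper. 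One small point worth making explicit rather than deferring as ``routine'': the identification of your partition (coincidence of columns of $A$, equivalently of the $u_j$) with $\pi^{\{1\}}(A)$ rests on the fact that for $A\in\cP_N(\C)$ the coarsest partition making the \emph{diagonal} blocks constant already makes \emph{all} blocks constant, which is the content of Theorem~\ref{Tgroup}(1)/Proposition~\ref{Pexists}; without citing that, the match-up of definitions is not quite free.
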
 

The matrix $\sum_{j = 0}^{N - 1} c_j A^{\circ j}$ is obtained by
applying the polynomial $p(x) = \sum_{j=0}^{N-1} c_j x^j$ to the
entries of $A$. The first equality thus provides a connection between
the study of simultaneous kernels and the study of polynomials that
preserve positivity when applied entrywise to matrices in $\cP_N(\C)$;
see \cite{BGKP-fixeddim, BGKP-fpsac} for more details.

An intriguing consequence of Theorem~\ref{Tsimul} is the rigidity of the
simultaneous kernels: there are only finitely many possibilities for
$\cK(A)$. This is in stark contrast to the case where Hadamard powers are
replaced by conventional powers, in which case any subspace of $\C^N$ can
obviously arise as a simultaneous kernel. 

The proof of Theorem~\ref{Tsimul} in \cite{BGKP-fixeddim}
was lengthy and involved, with two main ingredients.
\begin{enumerate}
\item[(a)] An observation of Hershkowitz, Neumann, and Schneider
\cite{Matrix01psd}, which classifies Hermitian positive semidefinite
matrices whose entries are $0$ or $1$ in modulus.
\item[(b)] A novel Schubert cell-type stratification for Hermitian
positive semidefinite matrices \cite[Theorem~5.1]{BGKP-fixeddim}. 
\end{enumerate}

We show in the present note how these two ingredients, as well as
Theorem~\ref{Tsimul}, can be extended to a much broader class of
Hermitian matrices.  Namely, we show that these results hold for any
matrix whose principal minors of size at most $3$ are
non-negative. Matrices satisfying the latter property will be termed
$3$-PMP (\emph{principal minor positive}). Apart from proving
Theorem~\ref{Tsimul} in greater generality, our new approach
simplifies the original proof \cite{BGKP-fixeddim}.

The article is structured as follows. In Section~\ref{Shns}, we recall
the main result of Hershkowitz--Neumann--Schneider \cite{Matrix01psd}
and extend it to $3$-PMP matrices. A key component of their original
proof is the \emph{principal submatrix rank property} (PSRP). We
therefore examine the relationship between the PSRP and the notion
of principal minor positivity. In Section~\ref{Ssign}, we identify
precisely how principal minor positivity constrains the signature of a
Hermitian matrix. In Section~\ref{Sstrat}, we recall and extend to all
Hermitian matrices the Schubert cell-type stratification of the cone
of positive semidefinite matrices that was developed in
\cite{BGKP-fixeddim}. Section~\ref{Ssimul} concludes the paper by
classifying the simultaneous kernels of Hadamard powers of all $3$-PMP
matrices.

\section{The Hershkowitz--Neumann--Schneider theorem and the principal
submatrix rank property}\label{Shns}

We start by isolating the main class of matrices of interest in this
paper.

\begin{definition}
Let $1 \leq k \leq N$. A Hermitian matrix $A \in \C^{N \times N}$ will be
termed \emph{$k$-PMP} if every principal $j \times j$ minor of $A$ is
non-negative, for $j = 1$, \dots, $k$.
\end{definition}

The notion of principal minor positivity interpolates
between Hermitian matrices, which are $0$-PMP by convention, and
positive semidefinite matrices, which are $N$-PMP.

\begin{remark}[Examples and special cases]\label{Rexample}
Given $N \geq 1$, let $A := \lambda \Id_N - \one{N}$,
where~$\Id_N$ is the $N \times N$ identity matrix and $\one{N}$ is
the rank-one $N \times N$ matrix with all entries equal to~$1$. It is
readily seen that if $k \in \{ 1, \dots, N \}$ and
$\lambda \in [ k - 1, k )$, then the matrix $A$ is $( k - 1 )$-PMP but
not $k$-PMP. Thus the successive inclusions in the PMP-filtration of
all Hermitian matrices are strict. R.~B.~Bapat pointed us to work of
Mohan, Parthasarathy, and Sridhar, who introduced the notion of
\emph{$P$-matrices of exact order $N-k$}; see~\cite{MPS}. These
are a refinement of $k$-PMP matrices, whose principal minors of size
no more than $k$ are positive, and those of size greater than $k$ are
negative. The matrix $A := \lambda \Id_N - \one{N}$ is a
$P$-matrix of exact order $N - k$ whenever $\lambda \in ( k - 1, k )$.
\end{remark}

Our goal in this paper is to compute the simultaneous kernel for the
entrywise powers of any given $3$-PMP matrix; these
comprise a much larger family of matrices than the cone $\cP_N( \C )$
that was considered in \cite{BGKP-fixeddim}. As noted in the
Introduction, the following Theorem is an important first step.

\begin{definition}[{Hershkowitz--Neumann--Schneider
\cite[Definition~2.1]{Matrix01psd}}]\label{Dperm}
A matrix $P \in \C^{N \times N}$ is called a \emph{unitary monomial
matrix} if $P = Q D$, where $Q$ is a permutation matrix and $D$ is a
diagonal matrix all of whose diagonal entries are of modulus $1$.
\end{definition}

\begin{theorem}[{Hershkowitz--Neumann--Schneider
\cite[Theorem~2.2]{Matrix01psd}}]\label{Thns}
A matrix $A \in \C^{N \times N}$ is Hermitian positive semidefinite and
all its entries have modulus $1$ or $0$ if and only if $A$ is similar, by
means of a unitary monomial matrix, to a direct sum of matrices each of
which is either a matrix with entries all equal to~$1$ or a zero matrix.
\end{theorem}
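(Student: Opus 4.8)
The plan is to establish the two implications separately; the reverse one is routine, and essentially all of the content lies in the forward direction. For the reverse implication, observe that a unitary monomial matrix $P = QD$ (with $Q$ a permutation matrix and $D$ diagonal with unit-modulus diagonal entries) satisfies $P^{-1} = P^*$, so conjugation by $P$ is a unitary conjugation; it therefore preserves the class of Hermitian positive semidefinite matrices, while $Q$ merely rearranges entries and $D$ multiplies the $(i,j)$ entry by a scalar of modulus $1$. Since a direct sum of all-ones blocks $\one{s}$ and zero blocks is visibly Hermitian positive semidefinite with every entry of modulus $0$ or $1$, so is any matrix unitarily monomially similar to such a direct sum.

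For the forward implication, suppose $A = (a_{ij}) \in \C^{N\times N}$ is Hermitian positive semidefinite with $|a_{ij}| \in \{0,1\}$ for all $i,j$. I would first extract two elementary consequences of positivity of the small principal submatrices. Since $a_{ii} \geq 0$ and $|a_{ii}| \in \{0,1\}$, each diagonal entry is $0$ or $1$; and if $a_{ii} = 0$, then the $2\times 2$ principal minor on $\{i,j\}$ equals $-|a_{ij}|^2 \geq 0$, so the $i$-th row and column of $A$ vanish. Grouping all such indices into a single zero block, I may restrict attention to the complementary index set $S$, on which $a_{ii} = 1$ for every $i$. On $S$ define a relation by $i \sim j$ if and only if $a_{ij} \neq 0$; it is reflexive and, by Hermiticity, symmetric, and transitivity is exactly where positivity re-enters: if $|a_{ij}| = |a_{jk}| = 1$ but $a_{ik} = 0$, then the $3\times 3$ principal minor on $\{i,j,k\}$, which equals $1 - |a_{ij}|^2 - |a_{ik}|^2 - |a_{jk}|^2 + 2\operatorname{Re}\bigl(a_{ij}a_{jk}\overline{a_{ik}}\bigr)$, takes the value $-1 < 0$, a contradiction. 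Thus $\sim$ is an equivalence relation; let $I_1, \dots, I_m$ be its classes. Conjugating by the permutation matrix $Q$ that lists $S$ class by class and puts the zero indices last turns $A$ into $\diag(B_1, \dots, B_m, 0)$, where each $B_\ell$ is an $|I_\ell| \times |I_\ell|$ Hermitian positive semidefinite matrix all of whose diagonal entries equal $1$ and all of whose off-diagonal entries have modulus exactly $1$.

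It then remains to normalise each block $B_\ell = (b_{ij})$. Applying the $3\times 3$ minor condition to a triple $\{1, i, j\}$ containing a fixed base index of $I_\ell$ gives $\operatorname{Re}\bigl(b_{1i} b_{ij}\overline{b_{1j}}\bigr) \geq 1$; since this quantity has modulus $1$, it equals $1$, so $b_{ij} = \overline{b_{1i}}\, b_{1j}$. Setting $u_i := \overline{b_{1i}}$ (so $|u_i| = 1$ and $u_1 = 1$), this reads $b_{ij} = u_i \overline{u_j}$, whence $D_\ell := \diag(u_1, \dots, u_{|I_\ell|})$ satisfies $D_\ell^* B_\ell D_\ell = \one{|I_\ell|}$. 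Finally I would set $P := Q \cdot \diag(D_1, \dots, D_m, \Id)$, which is a unitary monomial matrix, and conclude that $P^{-1} A P = P^* A P = \diag(\one{|I_1|}, \dots, \one{|I_m|}, 0)$ is the desired direct sum of all-ones matrices and a zero matrix.

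The only non-formal step — the part I would regard as the crux — is recognising the combinatorial rigidity imposed by the $3\times 3$ minors: a vanishing entry cannot bridge two non-vanishing ones, and on each support block the phases of the entries form an additive cocycle, hence a coboundary, trivialised by a diagonal unitary. (The original argument of Hershkowitz, Neumann, and Schneider instead routed this through the principal submatrix rank property.) It is worth emphasising that positive semidefiniteness of $A$ is used only through principal minors of sizes $1$, $2$, and $3$; this is precisely the observation that will permit the extension of Theorem~\ref{Thns} to all $3$-PMP matrices.
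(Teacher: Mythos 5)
Your proof is correct, but it takes a genuinely different route from the paper's own proof of Theorem~\ref{Thns}. The paper reproduces (in sketch) the original argument of Hershkowitz, Neumann, and Schneider, which proceeds by induction on the matrix size: the induction hypothesis supplies a unitary monomial matrix normalising the leading $(N-1)\times(N-1)$ submatrix, and then the full principal submatrix rank property is invoked to show that the last row and column, partitioned in conformity with the diagonal blocks, consist of scaled all-ones subvectors. Your argument avoids induction and the PSRP entirely: you zero out rows and columns with vanishing diagonal via $2\times 2$ minors, define an equivalence relation from the support pattern whose transitivity is forced by $3\times 3$ minors, pass to block-diagonal form by permutation, and then trivialise the phase cocycle within each block by another $3\times 3$ minor computation. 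The payoff is exactly what you observe in your closing sentence: only principal minors of size at most $3$ are used, so positive semidefiniteness can be weakened to the $3$-PMP property. Be aware, though, that in doing so you have essentially reconstructed the paper's own proof of Theorem~\ref{TnewHNS}, which is the $3$-PMP generalisation of Theorem~\ref{Thns} and one of the main new results of the paper; the calculation $1 - |a_{ij}|^2 - |a_{jk}|^2 - |a_{ik}|^2 + 2\Re(a_{ij}a_{jk}\overline{a_{ik}})$ and the conclusion $b_{ij} = \overline{b_{1i}}b_{1j}$ appear there almost verbatim. So while your proof is a different (and, for the purposes of this paper, better) route to the classical Theorem~\ref{Thns}, it is not a new route to Theorem~\ref{TnewHNS}.
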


Positive semidefinite matrices with entries in $S^1 \sqcup \{ 0 \}$,
as well as $3$-PMP matrices, occur in many settings. For example, they
naturally feature in Horn's study of the incidence matrices of
infinitely divisible matrices and kernels \cite[Theorem~1.13]{horn}. A
simple application of Theorem~\ref{Thns} also shows that graphs with
smallest adjacency eigenvalue $-1$ are disjoint unions of complete
graphs \cite{brouwer2011spectra}. Properties of matrix thresholding
were also derived in \cite{Guillot_Rajaratnam2012} using related
ideas.

\begin{remark}
It is worth mentioning that the unitary monomial matrices of
Definition~\ref{Dperm} were discovered by Banach in connection with a
typical early functional-analysis question. More precisely, these
matrices appear in the structure of the group $\cG_p$ of linear
isometries of~$\R^N$ or~$\C^N$ equipped with the $\ell^p$~norm, for
$p \in [1,\infty]$. (Thanks to the Mazur--Ulam theorem, in the real
case every isometry belongs to this group, up to a translation; the
existence of the canonical conjugation shows this is false in the
complex case.)

As is well known, the group $\cG_2$ is the orthogonal or unitary
group, respectively. If \mbox{$p \neq 2$}, then results of Banach and
Lamperti \cite{Banach,Lamperti} imply that the group $\cG_p$ does not
depend on~$p$, and consists of precisely the \emph{generalized
permutation matrices}, i.e., the products of permutation matrices with
diagonal orthogonal or unitary matrices.  Specifically, for $\R^N$ we
obtain the hyperoctahedral group of $2^N N!$ signed permutations,
i.e., the Weyl group of type $B_N = C_N$. In the complex case, the
group $\cG_p$ is composed of all matrices which are products of
permutations and diagonal unitary matrices, i.e., the unitary monomial
matrices. (Elementary proofs of these special cases of the
Banach--Lamperti theorem have been found.)
\end{remark}

Returning to Theorem~\ref{Thns}, the proof by Hershkowitz, Neumann,
and Schneider is rather intriguing, relying on the following notion.

\begin{definition}
A matrix $M \in \C^{N \times N}$ is said to satisfy the \emph{principal
submatrix rank property} (\emph{PSRP}) if the following conditions hold.
\begin{enumerate}
\item The column space determined by every set of rows of $M$ is equal to
the column space of the principal submatrix lying in these rows.
\item The row space determined by every set of columns of $M$ is equal to
the row space of the principal submatrix lying in these columns.
\end{enumerate}
\end{definition}

Hermitian positive semidefinite matrices satisfy the PSRP. For the
convenience of the reader, we reproduce the argument provided in
\cite{Matrix01psd} to prove this claim. Let $M \in \cP_N(\C)$ and
write $M$ as
\[
M = \begin{pmatrix}
A & B \\
B^* & C
\end{pmatrix},
\]
where $A \in \cP_k(\C)$ and $1 \leq k < N$. It is enough to show
that the row space of $\begin{pmatrix} A \\ B^* \end{pmatrix}$ is equal to
the row space of $A$. Working with orthogonal complements, this is
equivalent to the following: If
$\bv = \begin{pmatrix} \bw\\ \bZ \end{pmatrix} \in \C^N$ and
$A \bw = \bZ$, then $M \bv = \bZ$. But for such a vector $\bv$ we have
that~$\bv^* M \bv = \bw^*A \bw = 0$, and the result follows by the
positive semidefiniteness of $M$.

We include below a short sketch of the proof from \cite{Matrix01psd} to
illustrate how the principal submatrix rank property is used in the proof
of Theorem~\ref{Thns}. The reader is referred to \cite{Matrix01psd} for
the details.

\begin{proof}[Sketch of the proof of Theorem~\ref{Thns}]
The proof proceeds by induction, with the case $n = 1$ trivial. Since the
leading $(n-1) \times (n-1)$ principal submatrix $B$ of $A$ is positive
semidefinite, there is a unitary monomial matrix $P$ such that
$P^{-1} B P$ is a direct sum of~$\one{k}$ matrices and a zero matrix.
If $R = P \oplus 1$, then $C = R^{-1} A R$ is such that all non-zero
elements in the last row and column of $C$ are of modulus $1$ and the
leading $( n - 1 ) \times ( n - 1 )$ submatrix of~$C$ is a direct sum
of the desired form. We now partition the last row and column of $C$
in conformity with this direct sum. Since $C$ is positive
semidefinite, it follows by the PSRP that each subvector of the last
row and column of $C$ determined by this partition is a multiple of
the vector with entries all~$1$ of the appropriate size by a number of
modulus $1$ or by $0$. The conclusion follows.
\end{proof}

Next we show how Theorem~\ref{Thns} can naturally be extended to $3$-PMP
matrices.

\begin{theorem}\label{TnewHNS}
Given a Hermitian matrix $A \in \C^{N \times N}$, the following are
equivalent.
\begin{enumerate}
\item[\tu{(1)}] The matrix $A$ is $3$-PMP with entries of modulus $0$ or
$1$.
\item[\tu{(2)}] There exist a diagonal matrix $D$, whose diagonal
entries lie in $S^1$, and a permutation matrix $Q$, such that
$( Q D )^{-1} A ( Q D )$ is a block-diagonal matrix with each diagonal
block a square matrix of either all ones or all zeros.
\item[\tu{(3)}] The matrix $A \in \cP_N(\C)$ with entries of modulus $0$
or $1$.
\end{enumerate}
\end{theorem}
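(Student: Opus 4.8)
The plan is to establish the cycle of implications $(3)\Rightarrow(2)\Rightarrow(1)\Rightarrow(3)$. The implication $(3)\Rightarrow(2)$ is precisely the Hershkowitz--Neumann--Schneider Theorem~\ref{Thns}, so nothing new is needed there. The implication $(2)\Rightarrow(1)$ is also routine: conjugation by a unitary monomial matrix $QD$ preserves the Hermitian property, multiplies each principal minor by a unit modulus factor times its conjugate (hence leaves every principal minor unchanged, being a real nonnegative quantity), and visibly sends entries of modulus $0$ or $1$ to entries of modulus $0$ or $1$; and a block-diagonal matrix whose blocks are all-ones or all-zeros has every principal minor equal to $0$ or $1$, so in particular is $3$-PMP. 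Thus the entire content of the theorem is the implication $(1)\Rightarrow(3)$, which upgrades "$3$-PMP with entries in $S^1\sqcup\{0\}$" to "positive semidefinite".

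For $(1)\Rightarrow(3)$, the key observation is that for a Hermitian matrix all of whose entries have modulus $0$ or $1$, the $1\times 1$ and $2\times 2$ principal minor conditions are extremely restrictive, and the $3\times 3$ conditions then force a rigid combinatorial structure. First, the $1\times 1$ minors give $a_{ii}\in\{0,1\}$. If $a_{ii}=0$, then nonnegativity of the $2\times 2$ minor $\begin{vmatrix} 0 & a_{ij}\\ \overline{a_{ij}} & a_{jj}\end{vmatrix}=-|a_{ij}|^2\ge 0$ forces $a_{ij}=0$ for all $j$, so any index with a zero diagonal entry gives an identically zero row and column; after a permutation we may split these off as a zero block and assume $a_{ii}=1$ for all $i$. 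Now every off-diagonal entry has modulus exactly $0$ or $1$; define a graph $G$ on $\{1,\dots,N\}$ with an edge $\{i,j\}$ whenever $a_{ij}\ne 0$. The $3\times 3$ principal minor on $\{i,j,k\}$, expanded with all diagonal entries equal to $1$, evaluates (using $|a_{pq}|\in\{0,1\}$) to $2\,\mathrm{Re}(a_{ij}a_{jk}a_{ki}) - |a_{ij}|^2 - |a_{jk}|^2 - |a_{ki}|^2$; nonnegativity then forces, case by case on how many of the three entries vanish, that (i) no vertex has exactly two incident edges among $\{i,j,k\}$ unless the third is also present — i.e.\ $G$ has no induced path on three vertices, hence each connected component of $G$ is a complete graph — and (ii) on any triangle $\{i,j,k\}$ of $G$ one has $\mathrm{Re}(a_{ij}a_{jk}a_{ki})=3/2$, which with $|a_{ij}a_{jk}a_{ki}|=1$ forces $a_{ij}a_{jk}a_{ki}=1$.

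The remaining step is to deduce from this cocycle condition that $A$ is block-diagonalizable by a diagonal unitary into an all-ones form on each component, which simultaneously proves $A\in\cP_N(\C)$ (a direct sum of $\one{k}$'s and a zero block is manifestly positive semidefinite). Restricting to a single complete component $I$, pick a base vertex $i_0\in I$ and set $d_i := a_{i i_0}$ for $i\in I$ (with $d_{i_0}=1$), each of modulus $1$. The relation $a_{ij}a_{ji_0}a_{i_0 i}=1$ together with $a_{ji_0}=\overline{d_j}$, $a_{i_0 i}=\overline{a_{ii_0}}=\overline{d_i}$ gives $a_{ij}=d_i\overline{d_j}$ for all $i,j\in I$; hence, with $D_I=\diag(d_i)_{i\in I}$, we get $D_I^{-1}(A|_I)D_I=\one{|I|}$. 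Assembling over components and the zero block yields $(2)$ and, since each summand is positive semidefinite, $(3)$. The main obstacle is purely bookkeeping: organizing the $3\times 3$ minor case analysis cleanly (how many of the three off-diagonal entries are zero) and verifying that the triangle condition propagates consistently around cycles in a complete component — i.e.\ that the locally defined phases $d_i$ are globally well defined. This follows because any two triangles sharing an edge force the same value by the cocycle identity, and a complete graph on $\ge 3$ vertices is generated by its triangles through the base vertex; I expect this to take a short lemma rather than a long computation.
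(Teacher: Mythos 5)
Your proposal follows essentially the same route as the paper's proof: both use the $3\times 3$ principal minor to rule out the ``path'' pattern $\bigl(\begin{smallmatrix}1&a&0\\\overline a&1&b\\0&\overline b&1\end{smallmatrix}\bigr)$, which yields the block structure, and then use the $3\times3$ minor on triangles to force $a_{ij}a_{jk}a_{ki}=1$, giving the rank-one (all-ones after phase conjugation) form of each block. One small arithmetic slip: the $3\times3$ determinant is $1 + 2\Re(a_{ij}a_{jk}a_{ki}) - |a_{ij}|^2 - |a_{jk}|^2 - |a_{ki}|^2$ (you omitted the leading $1$), so on a triangle with all entries of modulus one it reads $2\Re(a_{ij}a_{jk}a_{ki}) - 2 \ge 0$, forcing $\Re(a_{ij}a_{jk}a_{ki}) = 1$ rather than $3/2$ (the latter is impossible for a complex number of modulus one); the resulting cocycle condition $a_{ij}a_{jk}a_{ki}=1$ and the rest of your argument are unaffected.
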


Note that Theorem~\ref{TnewHNS} fails for the Hermitian matrix
\[
A = %
\begin{pmatrix} \phantom{-}1 & \phantom{-}1 & -1 \\
 \phantom{-}1 & \phantom{-}1 & \phantom{-}1 \\
 -1 & \phantom{-}1 & \phantom{-}1 \end{pmatrix},
\]
which is $2$-PMP but not $3$-PMP. Thus Theorem~\ref{TnewHNS} has no
immediate generalization.

\begin{proof}[Proof of Theorem~\ref{TnewHNS}]
If (2) holds then $( Q D )^{-1} A ( Q D )$ is positive semidefinite and
so $A$ is also positive semidefinite. Thus (2) implies (3). Clearly (3)
implies (1).

Finally, suppose (1) holds. Then $a_{j j} \in \{ 0, 1 \}$ for
all~$j$, and the cases $N = 1$ and $N = 2$ are readily verified;
suppose $N \geq 3$. Note that $A$ has no principal $3 \times 3$
submatrix of the form
\[
C = \begin{pmatrix} 1 & a & 0 \\
 \overline{a} & 1  & b \\
 0 & \overline{b} & 1 \end{pmatrix}, \qquad %
\text{where } a, b \in S^1,
\]
since $\det C = 1 - b \overline{b} - a \overline{a} = -1$. Thus the
non-zero entries of $A$ can be permuted into a block-diagonal matrix
by conjugation with a permutation matrix $Q$. It remains to show each
non-zero diagonal block has rank one, since if these blocks are of the
form~$\bu_j \bu_j^*$, with the entries of~$\bu_j$ in~$S^1$, then we
can write the concatenation of the~$\bu_j$ as the diagonal entries of
a diagonal matrix~$D$, and the result follows.

We suppose henceforth that every entry of $A$ has modulus one, and
claim that $A = \bu \bu^*$, where
$\bu = ( a_{1 1}, \dots, a_{1 N} )^*$, i.e., that
$a_{i j} = \overline{a_{1 i}} a_{1 j}$ for all $i$ and $j$. As~$A$ is
$3$-PMP, the principal minor
\[
\begin{vmatrix} 1 & a_{1 i} & a_{1 j} \\
 a_{i 1} & 1  & a_{i j} \\
 a_{j 1} & a_{j i} & 1 \end{vmatrix} = %
2 \Re( a_{1 i} a_{i j} a_{j 1} ) - 2
\]
is non-negative, so $a_{1 i} a_{i j} a_{j 1} = 1$ and $a_{i j} =
\overline{a_{1 i} a_{j 1}} = \overline{a_{1 i}} a_{1 j}$. This proves
$(1) \implies (2)$, and concludes the proof.
\end{proof}

Theorem~\ref{TnewHNS} uses the $3$-PMP property instead of the
PSRP. It is therefore natural to ask how different these two
properties are. To explore this question, we introduce a refinement of
the principal submatrix rank property.
 
\begin{definition}
A matrix $M \in \C^{N \times N}$ is said to satisfy the
\emph{$k$-principal submatrix rank property} (\emph{$k$-PSRP}) if
the following conditions hold.
\begin{enumerate}
\item The column space determined by every set of $k$ rows of $M$ is
equal to the column space of the principal submatrix lying in these
rows.
\item The row space determined by every set of $k$ columns of $M$ is
equal to the row space of the principal submatrix lying in these
columns.
\end{enumerate}
\end{definition}

Our next result shows that a $k$-PMP matrix satisfies the $(k-1)$-PSRP.

\begin{theorem}\label{Tpsrp}
Let $M \in \C^{N \times N}$ be a $k$-PMP Hermitian matrix, where
$2 \leq k \leq N$. Then $M$ satisfies the $l$-PSRP for all $l < k$. 
\end{theorem}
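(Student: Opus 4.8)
The plan is to reduce to the case of a single set of $l$ rows (or, by symmetry and Hermitianness, $l$ columns) and a single extra row, and then to exploit $3\times 3$ minors — which is where the hypothesis $l+1\le k$ comes in. Concretely, fix a set $S$ of $l$ row-indices; after conjugating by a permutation matrix (which preserves both the $k$-PMP property and the $l$-PSRP, since it permutes principal minors and permutes the relevant submatrices consistently) I may assume $S = \{1,\dots,l\}$. Write
\[
M = \begin{pmatrix} A & B \\ B^* & C \end{pmatrix},
\qquad A = M[S] \in \C^{l \times l}.
\]
I want to show that the row space of $\begin{pmatrix} A & B \end{pmatrix}$ equals the row space of $A$; equivalently (passing to orthogonal complements inside $\C^N$, as in the positive semidefinite argument reproduced above), that whenever $\bw \in \C^l$ satisfies $A\bw = \bZ$, the vector $\bv := \begin{pmatrix} \bw \\ \bZ \end{pmatrix}$ satisfies $M\bv = \bZ$. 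Since $A\bw = \bZ$ already gives the first $l$ coordinates of $M\bv$, the content is that $B^*\bw = \bZ$, i.e.\ that $\bw$ is orthogonal to every column of $B$.

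The key step is therefore: given $\bw \in \C^l$ with $A\bw = \bZ$ and an index $t \notin S$, show $\langle \bw, B_t\rangle = 0$ where $B_t$ is the $t$-th column of $B$ (restricted to rows $S$). Here I would use that the principal submatrix of $M$ on the index set $S \cup \{t\}$, which is
\[
M' = \begin{pmatrix} A & B_t \\ B_t^* & m_{tt} \end{pmatrix},
\]
is itself $k$-PMP of size $l+1 \le k$, hence \emph{positive semidefinite}. (This is the crucial observation: a $k$-PMP Hermitian matrix of size at most $k$ has all its principal minors non-negative, so it is positive semidefinite by Sylvester-type reasoning; this is exactly the statement that being $N$-PMP means being positive semidefinite, applied in size $l+1$.) Now apply the positive semidefinite PSRP argument reproduced in the paper to $M'$: from $A\bw = \bZ$ we get $\begin{pmatrix}\bw\\0\end{pmatrix}^* M' \begin{pmatrix}\bw\\0\end{pmatrix} = \bw^* A \bw = 0$, and positive semidefiniteness of $M'$ forces $M'\begin{pmatrix}\bw\\0\end{pmatrix} = \bZ$, whose last coordinate is $B_t^*\bw = 0$. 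Running over all $t \notin S$ gives $B^*\bw = \bZ$, as required, establishing condition (1) of the $l$-PSRP for the set $S$. Condition (2) follows by applying condition (1) to $M^* = M$ after a conjugate-transpose, or simply by repeating the argument with rows and columns interchanged, using that $M[S\cup\{t\}]$ is Hermitian $k$-PMP hence positive semidefinite regardless of orientation.

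The main obstacle — really the only non-formal point — is the passage ``$k$-PMP and size $\le k$ $\Rightarrow$ positive semidefinite''. I would justify it by noting that if $M'$ is $(l+1)\times(l+1)$ and every principal minor of $M'$ of size $\le k$ is non-negative, then since $l+1\le k$ \emph{all} principal minors of $M'$ are non-negative; a Hermitian matrix all of whose principal minors are non-negative is positive semidefinite (for instance because its characteristic polynomial has coefficients of alternating sign given by sums of principal minors, forcing all eigenvalues $\ge 0$). Everything else is the orthogonal-complement bookkeeping already modeled in the paper's treatment of the positive semidefinite case, plus the routine check that permutation conjugation is compatible with the definitions, so the write-up should be short.
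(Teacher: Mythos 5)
Your proof is correct and takes essentially the same approach as the paper: both isolate a principal submatrix of size $l+1 \le k$ on the index set $S \cup \{t\}$, observe that it is positive semidefinite because $M$ is $k$-PMP, and then conclude that the corresponding column lies in $\col(A)$ by exploiting that positive semidefiniteness. The only difference is cosmetic: you close the argument by reapplying the quadratic-form argument that establishes the PSRP for positive semidefinite matrices, whereas the paper closes with a Schur-complement argument (treating $m_{ii}=0$ as a separate case).
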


\begin{proof}
Since a $k$-PMP matrix is also $l$-PMP for all $l < k$, it suffices to
take $l = k-1$. Without loss of generality, we show that
the column space determined by the first $k-1$ rows of $M$ is equal to
the column space of the leading principal $(k-1) \times (k-1)$ submatrix
of $M$; the general case follows by simultaneously
permuting the rows and columns of $M$. Write 
\[
M = \begin{pmatrix}
A & B \\
B^* & C
\end{pmatrix},
\]
with $A \in \C^{(k-1) \times (k-1)}$,
$B \in \C^{(k-1) \times (N-k+1)}$, and
$C \in \C^{(N-k+1) \times (N-k+1)}$. Denote the
columns of $B$ by $\bb_k$, $\bb_{k+1}$, \dots, $\bb_N \in \C^{k-1}$,
and let $\col(A)$ denote the column space of~$A$. We
will prove that $\bb_i \in \col(A)$ whenever $k \leq i \leq N$. Indeed,
for such $i$, let
\[
M_i := \begin{pmatrix}
A & \bb_i \\
\bb_i^* & m_{ii}
\end{pmatrix} \in \C^{k \times k}
\]
be the principal submatrix of $M$ formed by its rows and columns
numbered $1$, \dots, $k-1$, and~$i$. By assumption,
$M_i \in \cP_k(\C)$.
If $m_{ii} = 0$, then
$\bb_i = \bZ \in \col(A)$, which may be seen by
inspecting $2 \times 2$ principal minors of $M_i$. Otherwise, the
Schur complement of $m_{ii}$ in~$M_i$ is positive semidefinite, i.e.,
\begin{equation}\label{Eschur}
A - \frac{1}{m_{ii}} \bb_i \bb_i^* \in \cP_{k-1}(\C).
\end{equation}
If $\bb_i \not\in \col(A)$, there exists
$\bv \in \col(A)^\perp = \ker A$ such that $\bv^* \bb_i \neq 0$. For
such a vector $\bv$, we have that
\[
\bv^* \Bigl( A - \frac{1}{m_{ii}} \bb_i \bb_i^* \Bigr) \bv = 
-\frac{1}{m_{ii}} | \bv^* \bb_i |^2 < 0,
\]
contradicting Equation (\ref{Eschur}). We conclude that
$\bb_i \in \col(B)$, as claimed.
\end{proof}

Theorem~\ref{Tpsrp} shows that the $k$-PMP property imposes 
constraints on the row and column spaces of the matrix. However, our
next result shows that there exist $k$-PMP matrices with an arbitrarily
large gap between the dimension of the column space determined by a set
of~$l$ rows, and the rank of the principal submatrix lying in these rows.
Thus there is a major discrepancy between the $3$-PMP property and the
PSRP.

\begin{theorem}\label{Tpsrp2}
Let $2 \leq k \leq l < N$. Then there exists a real
symmetric matrix 
\begin{equation}\label{Eblock}
M = \begin{pmatrix}
A & B \\
B^T & C
\end{pmatrix} \in \R^{N \times N},
\end{equation}
with $A \in \R^{l \times l}$, such that
\begin{enumerate}
\item[\tu{(1)}] the matrix $M$ is $k$-PMP,
\item[\tu{(2)}] the matrix $A$ has rank $k - 1$, and
\item[\tu{(3)}] the matrix
$\begin{pmatrix} A & B\end{pmatrix}$ has rank
$\min\{ l, k - 1 + N - l \}$.
\end{enumerate}
In particular, the matrix $M$ does not satisfy the $l$-PSRP.
\end{theorem}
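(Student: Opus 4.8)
The plan is to construct $M$ explicitly as a block matrix of the required shape in which $A$ is a carefully chosen positive semidefinite matrix of rank exactly $k-1$, the columns of $B$ are taken to lie in $\ker A$ and to span as large a subspace of $\ker A$ as possible, and $C := t\,\Id_{N-l}$ for a sufficiently large real scalar $t > 0$. Placing the columns of $B$ inside $\ker A$ is precisely what pushes the rank of $\begin{pmatrix} A & B\end{pmatrix}$ above $\mathrm{rank}(A) = k-1$, so this is how (2) and (3) get arranged; taking $t$ large is what forces every $k \times k$ principal submatrix of $M$ to be positive semidefinite, which yields (1). Conditions (1) and (3) are not in conflict because (1) only ever constrains principal submatrices of $M$ that use at most $k-1$ rows from the $A$-block, whereas (3) concerns all $l$ rows of that block.

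First I would fix $A$. Since $l \geq k-1$, choose any $l \times (k-1)$ real matrix $V$ each of whose $(k-1)\times(k-1)$ submatrices is invertible (a Vandermonde matrix with distinct nodes is the simplest choice), and set $A := V V^T$. Then $A$ is positive semidefinite; any $k-1$ of the $l$ rows of $V$ are linearly independent, so $\mathrm{rank}(A) = \mathrm{rank}(V) = k-1$, giving (2); and every principal submatrix of $A$ of size at most $k-1$ is the Gram matrix of at most $k-1$ linearly independent rows of $V$, hence is \emph{strictly} positive definite — this is the crucial property. Since $\dim \ker A = l - (k-1) \geq 1$, I next pick the columns $\bb_{l+1}, \dots, \bb_N \in \ker A$ of $B$ so that they span a subspace of $\ker A$ of dimension $d := \min\{N-l,\, l-k+1\} \geq 1$ (take $d$ of them to form a partial basis of $\ker A$ and set the rest to $0$). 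Because $\ker A \perp \col(A)$, we obtain $\col\begin{pmatrix} A & B\end{pmatrix} = \col(A) \oplus \mathrm{span}(\bb_{l+1}, \dots, \bb_N)$, which has dimension $(k-1) + d = \min\{l,\, k-1+N-l\}$; this is (3).

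To verify (1) I would use the elementary sufficient condition that a Hermitian matrix all of whose $k \times k$ principal submatrices are positive semidefinite is $k$-PMP (any principal minor of size at most $k$ is a principal minor of some $k \times k$ principal submatrix, since $k \leq N$). So fix index sets $S_1 \subseteq \{1,\dots,l\}$ and $S_2 \subseteq \{l+1,\dots,N\}$ with $|S_1| + |S_2| = k$. If $S_2 = \emptyset$, the corresponding principal submatrix is a principal submatrix of the positive semidefinite matrix $A$, hence positive semidefinite. If $S_2 \neq \emptyset$, then $|S_1| \leq k-1$ and the submatrix equals $\begin{pmatrix} A[S_1,S_1] & B[S_1,S_2] \\ B[S_1,S_2]^T & t\,\Id_{|S_2|}\end{pmatrix}$; since $t\,\Id_{|S_2|} \succ 0$, taking the Schur complement shows this is positive semidefinite if and only if $A[S_1,S_1] - \tfrac1t B[S_1,S_2] B[S_1,S_2]^T \succeq 0$, and as $A[S_1,S_1] \succ 0$ by the choice of $V$, this holds for all sufficiently large $t$. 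There are only finitely many pairs $(S_1,S_2)$, so a single large $t$ serves for all of them, establishing (1). The final assertion follows immediately from (2) and (3): the column space determined by the first $l$ rows of $M$ is $\col\begin{pmatrix} A & B\end{pmatrix}$, of dimension $\min\{l, k-1+N-l\} \geq \min\{l,k\} = k > k-1$, so it strictly contains $\col(A)$, the column space of the principal submatrix lying in those rows, and hence $M$ violates the $l$-PSRP.

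The one genuinely design-sensitive point — and the step I would be most careful with — is ensuring that every principal submatrix of $A$ of size at most $k-1$ is strictly positive definite rather than merely positive semidefinite: this strictness is exactly what lets the Schur-complement perturbation argument close for large but finite $t$, and the general-position (Vandermonde) choice of $V$ is what secures it. Everything else is bookkeeping with ranks and block matrices.
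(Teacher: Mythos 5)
Your proposal is correct and takes essentially the same route as the paper: the paper's Lemma~\ref{Lpsrp} constructs $A = \sum_{i=0}^{k-2} \bu^{\circ i}(\bu^{\circ i})^T$, which is exactly your $VV^T$ with $V$ the Vandermonde matrix, and the paper then populates $B$ with vectors from $\ker A$ and closes condition (1) by a Schur-complement perturbation argument. The only cosmetic difference is that the paper keeps $C = \Id_{N-l}$ and shrinks $B$ by small scalars $\epsilon_i$, while you keep $B$ fixed and inflate $C = t\,\Id_{N-l}$; the two normalizations are equivalent and both rest on the same strict positive definiteness of $A[S_1,S_1]$ for $|S_1| \le k-1$.
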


The following simple lemma is crucial to our proof of
Theorem~\ref{Tpsrp2}.

\begin{lemma}\label{Lpsrp}
Let $1 \leq m \leq l$. There exists a positive semidefinite matrix
$A \in \cP_l(\R)$ with rank $m$, such that the $p \times p$ principal
minors of $A$ are strictly positive whenever $1 \leq p \leq m$.
\end{lemma}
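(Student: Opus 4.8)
The plan is to exhibit $A$ explicitly as the Gram matrix of a family of vectors in $\R^m$ chosen in sufficiently general position. First I would fix distinct real numbers $t_1, \dots, t_l$ and let $V \in \R^{m \times l}$ be the Vandermonde-type matrix whose $j$th column is $\bv_j := ( 1, t_j, \dots, t_j^{m-1} )^T$. Then set $A := V^T V$. By construction $A \in \cP_l(\R)$, and since $\mathrm{rank}(V^T V) = \mathrm{rank}(V)$ and $V$ contains an invertible $m \times m$ Vandermonde block (e.g.\ on its first $m$ columns), the rank of $A$ is exactly $m$; in particular all principal minors of size greater than $m$ vanish, consistent with the assertion.

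For the minor condition, the key observation is that every choice of at most $m$ columns of $V$ is linearly independent. Indeed, if $S = \{ j_1 < \dots < j_p \} \subseteq \{ 1, \dots, l \}$ with $p \leq m$, and $V_S$ denotes the $m \times p$ submatrix of $V$ formed by the columns indexed by $S$, then the top $p \times p$ block of $V_S$ is the Vandermonde matrix in $t_{j_1}, \dots, t_{j_p}$, whose determinant $\prod_{1 \leq r < s \leq p}( t_{j_s} - t_{j_r} )$ is non-zero. Hence $V_S$ has full column rank $p$, so the principal submatrix $A[S] = V_S^T V_S$ is positive \emph{definite}, and therefore $\det A[S] > 0$. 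Since $S$ was an arbitrary $p$-element subset, every $p \times p$ principal minor of $A$ is strictly positive for $1 \leq p \leq m$, as required.

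There is essentially no serious obstacle here: the only thing that needs care is recording the correct general-position property, and the Vandermonde structure is a convenient device that simultaneously guarantees $\mathrm{rank}(V) = m$ and that all column-submatrices of $V$ of size at most $m$ have full rank. (A less explicit alternative would be to start from $\Id_m \oplus 0_{l-m}$ and perturb: the Gram matrices of rank $m$ that violate any one of the finitely many minor inequalities form a proper, lower-dimensional locus, so a generic rank-$m$ positive semidefinite matrix works. I prefer the Vandermonde construction since it is self-contained and avoids any genericity argument.)
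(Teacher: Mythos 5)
Your construction is exactly the paper's: the paper takes $A = \sum_{i=0}^{m-1} \bu^{\circ i}(\bu^{\circ i})^T$ for a vector $\bu$ with distinct (nonzero) entries, which is precisely $V^T V$ for your Vandermonde matrix $V$, and then invokes the non-singularity of Vandermonde matrices just as you do. Your write-up simply spells out the Vandermonde argument in more detail (and correctly observes that distinctness of the nodes is all that is needed, the paper's extra ``non-zero'' hypothesis being superfluous), so the proposal is correct and takes essentially the same approach.
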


\begin{proof}
Let $\bu = ( u_1, \ldots, u_l )^T \in \R^l$ be a vector
with distinct non-zero entries. Define
\[
A = \sum_{i=0}^{m-1} \bu^{\circ i} ( \bu^{\circ i} )^T, 
\]
where $\bu^{\circ i}$ is the vector with components
$( u_j^i )_{1 \leq j \leq l}$. Then $A$ is positive semidefinite, and
the desired properties follow immediately from the non-singularity of
Vandermonde matrices.
\end{proof}

\begin{proof}[Proof of Theorem~\ref{Tpsrp2}]
Let $A \in \cP_l(\R)$ be the matrix with rank~$m = k-1$
provided by Lemma~\ref{Lpsrp}. Choose vectors $\bu_{l+1}$, \dots,
$\bu_N \in \ker A$ such that their span has the largest dimension
possible, i.e., $\min\{ l - k + 1, N - l \}$. For $\epsilon_{l+1}$,
\dots, $\epsilon_N > 0$, let $B$ be the matrix with columns
$\epsilon_{l+1} \bu_{l+1}$, \dots, $\epsilon_{N} \bu_N$. Also,
let $C = \Id_{N - l}$, the $N - l$-dimensional identity matrix. Then
the matrix $M$ given by Equation~(\ref{Eblock}) satisfies properties
(2) and (3) of the theorem.

We claim that $M$ is also $k$-PMP if $\epsilon_{l+1}$, \dots,
$\epsilon_N$ are small enough. To prove this claim,
let~$I := \{ i_1, \dots, i_k \}$
be a subset of $\{ 1, \dots, N \}$ of cardinality $k$, and let $M_I$
denote the principal submatrix of $M$ formed by restricting $M$ to the
rows and columns in $I$. Clearly, $M_I$ is positive semidefinite if
either $I \subset \{ 1, \dots, l \}$ or $I \subset \{ l+1, \dots, N \}$.
Now assume $I$ contains elements of both sets, so
that~$I = I_1 \sqcup I_2$, with $I_1 \subset \{ 1, \dots, l \}$,
$I_2 \subset \{ l + 1, \dots, N \}$, and both $I_1$ and $I_2$ are
non-empty. Note that $M_{I_2} = \Id_{|I_2|}$. Thus $M_I$ is positive
semidefinite as long as the Schur complement
$M_{I_1} - M_{I_1, I_2} M_{I_1, I_2}^T$ is positive
semidefinite, where $M_{I_1, I_2}$ denotes the submatrix of $M$ with rows
in $I_1$ and columns in $I_2$. Note that $M_{I_1} = A_{I_1}$ is positive
definite by Lemma~\ref{Lpsrp}. By a continuity argument,
it follows that~$M_{I_1} - M_{I_1, I_2} M_{I_1, I_2}^T$ is also positive
definite if $0 < \epsilon_{l+1}, \dots, \epsilon_N < C_{I_1}$, for some
positive constant $C_{I_1}$ that depends only on $I_1$.
Let $C := \min_{J : |J| < k} C_J$, where the minimum is taken over all
subsets $J \subset \{ 1, \dots, l \}$ of size less than $k$. Then
$C > 0$ since there are finitely many such subsets. If
$0 < \epsilon_{l+1}$, \dots, $\epsilon_N < C$, this argument shows
that $M_I$ is positive semidefinite for any
set~$I \subset \{ 1, \dots, N \}$ of cardinality~$k$. We conclude that
$M$ is $k$-PMP, as required.
\end{proof}

\section{Principal minor positivity and signature}\label{Ssign}

We next explore how the property of a matrix being $k$-PMP affects its
signature, and so its rank. In the most restrictive case, if an
$N \times N$ matrix is $N$-PMP, then it has no negative
eigenvalues; moreover, every admissible signature
$(n_+, n_0 = N - n_+, n_- = 0)$ is attained, which may be seen by
considering the block-diagonal matrix
$\Id_{n_+} \oplus \bZ_{n_0}$.

The purpose of this section is to consider all other Hermitian matrices.
We now prove:

\begin{theorem}\label{Tpmp-signature}
Fix integers $k$ and $N$, with $0 \leq k < N$.
\begin{enumerate}
\item[\tu{(1)}] Let $A \in \C^{N \times N}$ be $k$-PMP but not
$(k+1)$-PMP. Then $A$ has at least $k$ positive eigenvalues, and at
least one negative eigenvalue.

\item[\tu{(2)}] Conversely, for every signature $( n_+, n_0, n_- )$
such that 
\[
n_+ \geq k, \quad n_0 = N - n_+ - n_- \geq 0, %
\quad \text{and} \quad n_- \geq 1,
\]
there exists a matrix $A \in \C^{N \times N}$ with this signature, and
such that $A$ is $k$-PMP but not $(k+1)$-PMP.
\end{enumerate}
\end{theorem}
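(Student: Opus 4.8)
The plan is to reduce both parts to one elementary fact: a Hermitian matrix of size at most $k$ is $k$-PMP if and only if it is positive semidefinite, since positive semidefiniteness of a Hermitian matrix is equivalent to non-negativity of all its principal minors. Everything else is bookkeeping with the Cauchy interlacing inequalities $\lambda_i(M) \ge \lambda_i(M') \ge \lambda_{i+n-r}(M)$, valid for a Hermitian $M$ of size $n$, a principal submatrix $M'$ of size $r$, and $1 \le i \le r$, with eigenvalues listed in decreasing order.

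For (1): because $A$ is $k$-PMP, every principal minor of size $\le k$ is non-negative, so the failure of $(k+1)$-PMP must occur at size exactly $k+1$; fix a $(k+1)\times(k+1)$ principal submatrix $B$ of $A$ with $\det B < 0$. Any $k\times k$ principal submatrix $B'$ of $B$ is also a $k\times k$ principal submatrix of $A$, so all of its principal minors are non-negative and hence $B'$ is positive semidefinite. Interlacing $B$ against $B'$ gives $\lambda_1(B),\dots,\lambda_k(B) \ge \lambda_k(B') \ge 0$; since $\det B < 0$, no eigenvalue of $B$ vanishes, so in fact $\lambda_1(B),\dots,\lambda_k(B) > 0$ and $\lambda_{k+1}(B) < 0$, i.e., $B$ has signature $(k,0,1)$. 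Interlacing $A$ against its principal submatrix $B$ then yields $\lambda_k(A) \ge \lambda_k(B) > 0$ and $\lambda_N(A) \le \lambda_{k+1}(B) < 0$, which is the assertion.

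For (2), the idea is to build a single \emph{efficient} block and then pad it. Assume $k \ge 1$ (the case $k=0$ is handled directly by $A := \Id_{n_+} \oplus \bZ_{n_0} \oplus (-\Id_{n_-})$). Put $m := n_- + k \ge k+1$ and let $R \in \cP_m(\R)$ be the rank-$k$ matrix furnished by Lemma~\ref{Lpsrp}, say $R = \sum_{i=0}^{k-1} \bu^{\circ i}(\bu^{\circ i})^T$ with $\bu \in \R^m$ having distinct non-zero entries; by non-singularity of Vandermonde matrices, every $p\times p$ principal submatrix of $R$ has rank $\min(p,k)$, and is positive definite when $p \le k$. Let $\delta > 0$ be the least of the smallest eigenvalues of the finitely many $k\times k$ principal submatrices of $R$, and set $M_0 := sR - \Id_m$ for $s$ sufficiently large ($s \ge 1/\delta$ already ensures the $k$-PMP property below). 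Then: each $k\times k$ principal submatrix of $M_0$ has the form $sR' - \Id_k \succeq 0$ with $R'$ a $k\times k$ principal submatrix of $R$, so $M_0$ is $k$-PMP; each $(k+1)\times(k+1)$ principal submatrix has the form $sR'' - \Id_{k+1}$ with $R''$ of rank $k$, whose determinant equals $-\prod_{i=1}^{k}(s\nu_i - 1) < 0$ for $s$ large, where $\nu_1,\dots,\nu_k > 0$ are the non-zero eigenvalues of $R''$, so $M_0$ is not $(k+1)$-PMP; and the eigenvalues of $M_0$ are $s\sigma_i - 1$ for the eigenvalues $\sigma_1 \ge \cdots \ge \sigma_k > 0 = \sigma_{k+1} = \cdots = \sigma_m$ of $R$, so for $s$ large $M_0$ has signature $(k,0,m-k)$. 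Finally set $A := M_0 \oplus \Id_{n_+-k} \oplus \bZ_{n_0}$. A direct sum of $k$-PMP matrices is $k$-PMP (a principal minor of size $\le k$ factors as a product of non-negative minors of the summands), the summand $M_0$ still supplies a negative $(k+1)\times(k+1)$ principal minor, the signature is $(k+(n_+-k),\,n_0,\,m-k) = (n_+,n_0,n_-)$, and the size is $m + (n_+-k) + n_0 = N$.

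The interlacing computations and the choice of $s$ through a finite minimum are routine. The genuinely delicate point is the construction of $M_0$ in (2): the tempting shortcut of direct-summing a fixed block such as $\lambda\Id_{k+1} - \one{k+1}$ (which Remark~\ref{Rexample} shows is $k$-PMP but not $(k+1)$-PMP) with copies of $\pm\Id$ and $\bZ$ fails, because a $-\Id$ summand already destroys $1$-PMP while one such block contributes only a single negative eigenvalue; one truly needs a $k$-PMP block carrying $m-k$ negative eigenvalues, and the crux is verifying that the $(k+1)\times(k+1)$ principal submatrices of $sR - \Id_m$ have \emph{negative}, rather than merely non-zero, determinant — which is exactly where the fact that $R''$ has rank precisely $k$ (a Vandermonde statement) is used.
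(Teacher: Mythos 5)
Your proof is correct. Part (1) is essentially the paper's argument — locate a $(k+1)\times(k+1)$ principal submatrix $B$ with $\det B < 0$, deduce via Cauchy interlacing that $B$ has signature $(k,0,1)$, then interlace $A$ against $B$ — you just spell the interlacing steps out in more detail than the paper does.

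Part (2) differs in one genuine respect: the construction of the $k$-PMP block of signature $(k,0,n_-)$. The paper also starts from the Lemma~\ref{Lpsrp} matrix $B \in \cP_{k+n_-}(\R)$ of rank $k$, but perturbs it as $B_\epsilon := B - \epsilon P$ with $P$ the orthogonal projection onto $\ker B$ and $\epsilon > 0$ small, verifying $k$-PMP by continuity of determinants; you instead take $M_0 := sR - \Id_m$ with $s$ large, verifying $k$-PMP by eigenvalue bounds. Both work, and both then append $\Id_{n_+-k} \oplus \bZ_{n_0}$. Where you diverge more noticeably is the verification that the block is not $(k+1)$-PMP: you compute each $(k+1)\times(k+1)$ principal minor explicitly, invoking the fact (again Vandermonde) that every $(k+1)\times(k+1)$ principal submatrix of $R$ has rank exactly $k$. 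The paper instead argues abstractly: if some $(k+1)\times(k+1)$ principal minor $M$ of the block were non-negative, then $M$, being $k$-PMP, would be positive semidefinite, and Cauchy interlacing would force the block to have at least $k+1$ non-negative eigenvalues, contradicting the signature $(k,0,n_-)$. This abstract route also applies verbatim to your $M_0$, so the Vandermonde rank observation you call ``the crux'' is in fact dispensable — the $k$-PMP property together with the signature already forces every $(k+1)$-minor to be negative, for any block, not just yours. That said, your explicit computation is a valid alternative, and your closing remark about why a naive direct sum of small blocks fails (a $-\Id$ summand ruins $1$-PMP, while a single $\lambda\Id_{k+1} - \one{k+1}$ block supplies only one negative eigenvalue) correctly identifies the design constraint that both constructions are built to satisfy.

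One small slip: for the determinant of $sR'' - \Id_{k+1}$ to be \emph{strictly} negative you need $s\nu_i > 1$, i.e., $s > 1/\delta$ rather than $s \geq 1/\delta$; your ``for $s$ large'' covers this, but the inline threshold should be strict.
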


\begin{proof}
We begin by showing (1). By assumption, there exists a
$(k+1) \times (k+1)$ principal minor of $A$, say $B$, such that
$\det B < 0$. As $B$ is $k$-PMP, it follows by Cauchy interlacing
that~$B$ has one negative eigenvalue and $k$ positive ones. Another
application of Cauchy interlacing now gives the same statement for $A$.

We next prove the converse result (2). By Lemma \ref{Lpsrp}, there exists
a matrix $B \in \cP_{k+n_-}(\R)$ of rank $k$ such that all $p \times p$
principal minors of $B$ are strictly positive, for
$1 \leq p \leq k$. Now let $P \in \cP_{k+n_-}(\R)$ denote the
projection matrix of the subspace $\ker B$, and define
\[
B_\epsilon := B - \epsilon P, \qquad \text{where } \epsilon > 0.
\]
By the continuity of determinants, we may fix $\epsilon$ sufficiently
small to ensure that all $p \times p$ principal minors of $B_\epsilon$
are positive, for $1 \leq p \leq k$, so that $B_\epsilon$ is
$k$-PMP. Furthermore, it is clear by diagonalization that $B_\epsilon$
has signature $( k, 0, n_- )$.

We now show that $B_\epsilon$ is not $(k+1)$-PMP. In fact, we claim 
that every $(k+1) \times (k+1)$ principal minor of $B_\epsilon$ is
negative. If $\det M \geq 0$ for some $(k+1) \times (k+1)$ principal
minor $M$ of $B_\epsilon$, then $M$ is positive semidefinite and so has
$k+1$ non-negative eigenvalues. This last statement then holds for
$B_\epsilon$, by Cauchy interlacing, which contradicts the fact that
$B_\epsilon$ has signature $( k, 0, n_- )$.

The result now follows by taking
$A := B_\epsilon \oplus \Id_{n_+ - k} \oplus \bZ_{N - n_+ - n_-}$.
\end{proof}

\section{Schubert cell-type stratification of $3$-PMP
matrices}\label{Sstrat}

In this section we explain and generalize the novel Schubert cell-type
stratification of~$\cP_N(\C)$ uncovered in \cite{BGKP-fixeddim},
which plays a crucial role in determining the simultaneous kernel
$\cK(A)$ defined in Equation~(\ref{Esimul}).

\begin{theorem}[{\cite[Theorem~5.1]{BGKP-fixeddim}}]%
\label{Tgroup}
Fix a multiplicative subgroup $G \subset \C^\times$, an integer
$N \geq 1$, and a non-zero matrix $A \in \cP_N( \C )$.
\begin{enumerate}
\item[\tu{(1)}] Suppose $\{ I_1, \dots, I_m \}$ is a partition of
$\{ 1, \dots, N \}$ satisfying the following conditions.
\begin{enumerate}
\item[\tu{(a)}] Each diagonal block $A_{I_j}$ of $A$ has rank at most
one, and $A_{I_j} = \bu_j \bu_j^*$ for a unique vector
$\bu_j \in \C^{| I_j |}$ with first entry
$\bu_{j, 1} \in [ 0, \infty )$.

\item[\tu{(b)}] The entries of each diagonal block $A_{I_j}$ lie in a
single $G$-orbit.
\end{enumerate}
Then there exists a unique matrix $C = ( c_{i j})_{i,j = 1}^m$ such
that $c_{i j} = 0$ unless $\bu_i \neq 0$ and $\bu_j \neq 0$, and $A$
is a block matrix with
\[
A_{I_i \times I_j} = c_{i j} \bu_i \bu_j^* \qquad %
( 1 \leq i, j \leq m ).
\]
Moreover, the entries of each off-diagonal block of $A$ also
lie in a single $G$-orbit. The matrix
$C \in \cP_m( \overline{D}(0,1))$, and the matrices $A$ and $C$ have
equal rank.

\item[\tu{(2)}] Consider the condition \tu{(c)}.
\begin{enumerate}
\item[\tu{(c)}] The diagonal blocks of $A$ have maximal size, i.e.,
each diagonal block is not contained in a larger diagonal block that
has rank one.
\end{enumerate}
There exists a partition $\{ I_1, \dots, I_m \}$ such that \tu{(a)},
\tu{(b)} and \tu{(c)} hold, and such a partition is unique up to
relabelling of the indices.

\item[\tu{(3)}] Suppose \tu{(a)}, \tu{(b)}, and \tu{(c)} hold, and
$G = \C^\times$. Then the off-diagonal entries of $C$ lie in the open
disc $D( 0, 1 )$.

\item[\tu{(4)}] If $G \subset S^1$, then diagonal blocks of $A$ in a
single $G$-orbit have rank at most one.
\end{enumerate}
\end{theorem}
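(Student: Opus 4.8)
The plan is to derive all four assertions from two facts already in hand: the principal submatrix rank property of positive semidefinite matrices, established in Section~\ref{Shns}, and the Hershkowitz--Neumann--Schneider classification, Theorem~\ref{Thns}. I would treat part~(1) first, since it is linear algebra once the PSRP is available; part~(2), the existence and essential uniqueness of the partition, is the combinatorial core; and parts~(3) and~(4) are then short deductions, from the maximality condition~(c) and from Theorem~\ref{Thns} respectively.

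For part~(1), I would fix blocks $I_i$ and $I_j$ and argue as follows. If $\bu_i = \bZ$, then $A_{I_i} = 0$, so every diagonal entry of $A$ indexed by $I_i$ vanishes, hence by positive semidefiniteness the corresponding rows and columns of $A$ vanish; thus $A_{I_i \times I_j} = 0$ and $c_{ij} := 0$ is forced. If $\bu_i, \bu_j \neq \bZ$, then the PSRP says that the column space spanned by the rows of $A$ indexed by $I_i$ equals $\col(A_{I_i}) = \C \bu_i$, so each column of $A_{I_i \times I_j}$ is a multiple of $\bu_i$; dually each row of $A_{I_i \times I_j}$ is a multiple of $\bu_j^*$, and therefore $A_{I_i \times I_j} = c_{ij} \bu_i \bu_j^*$ for a unique scalar $c_{ij}$, with $c_{ii} = 1$ since $A_{I_i} = \bu_i \bu_i^*$. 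Assembling these identities gives $A = U C U^*$, where $U \in \C^{N \times m}$ carries $\bu_j$ in its $j$-th column, with support contained in the rows indexed by $I_j$. The columns of $U$ have pairwise disjoint supports, so after deleting the zero columns $U$ has full column rank; this at once forces $C \in \cP_m(\C)$ and that $A$ and $C$ have equal rank. Since $c_{jj} \in \{0,1\}$, positivity of $C$ gives $|c_{ij}|^2 \leq c_{ii} c_{jj} \leq 1$, so $C \in \cP_m(\overline{D}(0,1))$. Finally, a nonzero entry of an off-diagonal block equals $c_{ij} \bu_{i,p} \overline{\bu_{j,q}}$, and the quotient of two such entries factors as $(\bu_{i,p}/\bu_{i,p'})\,\overline{(\bu_{j,q}/\bu_{j,q'})}$; each factor is a quotient of two entries of one diagonal block, hence lies in $G$ by hypothesis~(b), so the off-diagonal entries lie in a single $G$-orbit.

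For part~(2), I would write $A = V V^*$ and let $v_1, \dots, v_N$ denote the rows of $V$, so that $a_{ij}$ is the inner product of $v_i$ and $v_j$, and $A_J$ has rank at most one exactly when $\{ v_i : i \in J \}$ is contained in a line through the origin. I would then define the blocks directly, as the inclusion-maximal subsets $J \subset \{1,\dots,N\}$ for which $A_J$ satisfies~(a) and~(b); these exist (every singleton satisfies~(a) and~(b)), and the content is to show that distinct such maximal subsets are disjoint, so that they partition $\{1,\dots,N\}$ and automatically satisfy~(a), (b), (c). Disjointness is where positive semidefiniteness enters, and is the main obstacle: if $J$ and $J'$ are maximal with $p \in J \cap J'$, then either $a_{pp} = 0$ --- which forces $v_p = \bZ$ and hence, since a valid block cannot mix zero and nonzero entries, forces both $A_J$ and $A_{J'}$ to be zero blocks --- or $a_{pp} \neq 0$, which forces all $v_i$ with $i \in J \cup J'$ onto the line through $v_p$ (so $A_{J \cup J'}$ has rank at most one) and, via the identity $a_{ij} = a_{ip} a_{pj} / a_{pp}$, keeps all entries of $A_{J \cup J'}$ in a single coset of $G$; in either case $J \cup J'$ again satisfies~(a) and~(b), so $J = J \cup J' = J'$ by maximality. (A convenient way to recognise a valid block here is the bootstrap statement, proved by rescaling to a correlation matrix and applying Theorem~\ref{Thns}: if every $2 \times 2$ principal submatrix of a positive semidefinite $A_J$ has rank at most one, then so does $A_J$; it is exactly this step for which Theorem~\ref{TnewHNS} is the right substitute in the $3$-PMP generalisation.) Since any partition satisfying~(a), (b), (c) consists of such maximal subsets, uniqueness up to relabelling follows.

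For part~(3), I would take $G = \C^\times$, so that~(b) holds automatically for every subset. If $|c_{ij}| = 1$ for some $i \neq j$, then necessarily $\bu_i, \bu_j \neq \bZ$, and a direct computation shows $A_{I_i \cup I_j} = \bw \bw^*$ with $\bw := (\bu_i, \overline{c_{ij}}\,\bu_j)$, so $I_i \cup I_j$ is a strictly larger rank-one block, contradicting~(c); hence every off-diagonal entry of $C$ lies in $D(0,1)$. For part~(4), let $G \subset S^1$ and let $A_J$ be a diagonal block whose entries lie in a single $G$-orbit; then every entry of $A_J$ has the same modulus $r \geq 0$. If $r = 0$, then $A_J = 0$; otherwise $r^{-1} A_J$ is positive semidefinite with every entry of modulus $1$, so by Theorem~\ref{Thns} it is unitary-monomially similar to a single all-ones block and hence has rank one. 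In either case $A_J$ has rank at most one.
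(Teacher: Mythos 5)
The statement you are proving is quoted from \cite{BGKP-fixeddim} without a reproduced proof in the present paper, so there is no in-paper argument to match against line by line; the paper only notes that part~(4) is an immediate consequence of Theorem~\ref{Thns}, and the proof of the generalisation (Theorem~\ref{Tpmpstrata}) defers back to \cite{BGKP-fixeddim}. That said, your reconstruction is correct, and the ingredients you use --- the principal submatrix rank property to pin down the block structure in~(1), the Gram-vector picture $A = VV^*$ to show that maximal admissible blocks are pairwise disjoint in~(2), and Theorem~\ref{Thns} after rescaling to a correlation matrix for~(4) --- are precisely the tools the paper flags as the engine behind this result. Your observation that the same maximality-plus-merging argument survives when PSD is weakened to $3$-PMP, with Theorem~\ref{TnewHNS} replacing Theorem~\ref{Thns}, is exactly what the proof of Theorem~\ref{Tpmpstrata} claims.

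Two small points worth tightening: in~(1), when you claim the off-diagonal entries lie in a single $G$-orbit, the cleanest justification for $\bu_{i,p}/\bu_{i,p'} \in G$ (and its conjugate) is to identify these ratios with ratios of actual entries of $A_{I_i}$, namely $(A_{I_i})_{pp'}/(A_{I_i})_{p'p'}$ and $(A_{I_i})_{p'p}/(A_{I_i})_{p'p'}$; one should avoid invoking closure of $G$ under complex conjugation, which need not hold for an arbitrary subgroup of $\C^\times$. In~(2), your case $a_{pp}=0$ needs the extra remark that positive semidefiniteness kills the entire row and column through any zero diagonal entry, so that $A_{J\cup J'}$ is actually the zero block and not merely zero on its diagonal; you gesture at this but it deserves to be said. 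Neither point is a gap in the approach, only in the exposition.
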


Note that Theorem~\ref{Tgroup}(4) is an immediate consequence of
Theorem~\ref{Thns}.

\begin{example}
To illustrate Theorem~\ref{Tgroup}, consider the following
$5 \times 5$ Hermitian matrix,
\[
A = \begin{pmatrix}
2 & 2 & 1 & -2i & 2 \\
2 & 2 & 1 & -2i & 2 \\
1 & 1 & 1 & -i & 1 \\
2i & 2i & i & \hphantom{-}2 & 2i \\
2 & 2 & 1 & -2i & 2
\end{pmatrix}.
\]
It is readily verified that $A \in \cP_5(\C)$. Let $G_1 = \{1\}$, the
trivial multiplicative subgroup of~$\C^\times$, and consider the
partition $\pi_1 := \{ \{ 1, 2, 5 \}, \{ 3 \}, \{ 4 \} \}$.
Permuting the rows and columns of $A$ according to $\pi_1$, we obtain
\[
A' := \begin{pmatrix}
2 & 2 & 2 & 1 & -2i \\
2 & 2 & 2 & 1 & -2i \\
2 & 2 & 2 & 1 & -2i \\
1 & 1 & 1 & 1 & -i \\
2i & 2i & 2i & i & \hphantom{-}2
\end{pmatrix}.
\]
It follows immediately that $\pi_1$ is the unique partition, up to
relabelling of the indices, afforded by
Theorem~\ref{Tgroup}. Similarly, consider the cyclic subgroup
$G_2 = \{ 1, -1, i, -i \} \subset \C^\times$, and let the partition
$\pi_2 := \{ \{ 1, 2, 4, 5 \}, \{ 3 \} \}$. Permuting the rows and
columns of $A$ with respect to~$\pi_2$, we obtain
\[
A'' := \begin{pmatrix}
2 & 2 & -2i & 2 & 1 \\
2 & 2 & -2i & 2 & 1 \\
2i & 2i & \hphantom{-}2 & 2i & i \\
2 & 2 & -2i & 2 & 1 \\
1 & 1 & -i & 1 & 1
\end{pmatrix}.
\]
The $4 \times 4$ leading principal submatrix of $A''$ has rank $1$,
and its entries lie in the same $G_2$-orbit. It follows that
$\pi_2$ is the partition provided by Theorem~\ref{Tgroup} in this
case. Finally, if we take $G_3 = S^1 \subset \C^\times$, then we
recover the same block structure and partition as for $G_2$.

For all of these subgroups, the entries of each off-diagonal block
belong to a single orbit, as guaranteed by the theorem.
\end{example}

Theorem~\ref{Tgroup} provides a natural stratification of the cone
$\cP_N( \C )$.

\begin{definition}\label{Dpsd-stratum}
Let $\Pi_N$ denote the set of all partitions of $\{ 1, \dots, N \}$,
partially ordered so that~$\pi' \prec \pi$ if and only if $\pi$ is a
refinement of $\pi'$. Given a matrix $A \in \cP_N( \C )$ and
$G \subset S^1$, define $\pi^G( A ) \in \Pi_N$ to be the partition
provided by Theorem~\ref{Tgroup} for the matrix $A$. Conversely, for a
partition $\pi \in \Pi_N$ and $G \subset S^1$, let the \emph{stratum}
\[
\stratumsymb^G_\pi := \{ A \in \cP_N( \C ) : \pi^G( A ) = \pi \}.
\]
\end{definition}

The Schubert property is reflected in the decomposition
$\overline{\stratumsymb^G_\pi} = %
\bigsqcup_{\pi' \prec \pi} \stratumsymb^G_{\pi'}$,
where $\pi'$ runs over all coarsenings of $\pi$ in $\Pi_N$.

In \cite[Section~5]{BGKP-fixeddim}, it was shown that for
any matrix $A \in \cP_N( \C )$, the simultaneous kernel of its
entrywise powers,
\[
\cK( A ) := \bigcap_{n \geq 0} \ker A^{\circ n},
\]
is explicitly computable, and equals the kernel of a single matrix that
depends on $A$ only through the partition~$\pi^{\{ 1 \}}( A )$; see
Theorem~\ref{Tsimul}. Consequently, this simultaneous kernel
$\cK( A )$ is unchanged as~$A$ runs through a fixed
stratum~$\stratumsymb^{\{1\}}_\pi$ in~$\cP_N( \C )$.

In order to prove a version of Theorem~\ref{Tsimul} for $3$-PMP
matrices, we first extend Theorem~\ref{Tgroup}.

\begin{theorem}\label{Tpmpstrata}
Suppose $G \subset \C^\times$ is a multiplicative subgroup and
$3 \leq k \leq N$. The assertions in Theorem~\ref{Tgroup} all hold if
$A$ is only required to be $k$-PMP, except that the matrix $C$ is now
only assured to be $k$-PMP, rather than positive semidefinite.
\end{theorem}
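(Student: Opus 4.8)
The plan is to go through the proof of Theorem~\ref{Tgroup} in \cite{BGKP-fixeddim} and to isolate the two places where the positive semidefiniteness of $A$ is genuinely used: first, in deriving the rank-one structure $A_{I_i \times I_j} = c_{ij} \bu_i \bu_j^*$ of the off-diagonal blocks (obtained there from the principal submatrix rank property), and second, in the appeal to Theorem~\ref{Thns}, which supplies part~(4) and the positive semidefiniteness of $C$. The first I would replace by a direct computation with $3 \times 3$ principal minors, and the second by the extension Theorem~\ref{TnewHNS}; once these substitutions are in place the rest of the argument of \cite{BGKP-fixeddim} carries over unchanged, the only weakening of the conclusion being that $C$ is now merely $k$-PMP. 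Note that, by Theorem~\ref{Tpsrp2}, a $k$-PMP matrix need not satisfy the full PSRP, so the PSRP step cannot be quoted verbatim; but Theorem~\ref{Tpsrp} gives the $2$-PSRP, and in any case the specific consequence we need follows directly from $3$-PMP.

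For part~(1), fix $i \ne j$. If $\bu_i = \bZ$ then every diagonal entry of $A_{I_i}$ is zero, so inspecting $2 \times 2$ principal minors with a zero diagonal entry shows that all rows of $A$ indexed by $I_i$ vanish, and one sets $c_{ij} := 0$. Otherwise, for $p, p' \in I_i$ and $q \in I_j$, let $\alpha, \beta$ be the components of $\bu_i$ at $p$ and $p'$, so that $a_{pp} = |\alpha|^2$, $a_{p'p'} = |\beta|^2$, and $a_{pp'} = \alpha\overline{\beta}$ by hypothesis~(a). The identity
\[
\det\begin{pmatrix} |\alpha|^2 & \alpha\overline{\beta} & a_{pq} \\ \overline{\alpha}\beta & |\beta|^2 & a_{p'q} \\ \overline{a_{pq}} & \overline{a_{p'q}} & a_{qq} \end{pmatrix} = -\bigl| \alpha\, a_{p'q} - \beta\, a_{pq} \bigr|^2,
\]
combined with the $3$-PMP hypothesis, forces $\alpha\, a_{p'q} = \beta\, a_{pq}$; choosing $p \in I_i$ whose component of $\bu_i$ is non-zero (which exists since $\bu_i \ne \bZ$), every column of $A_{I_i \times I_j}$ is therefore a multiple of $\bu_i$. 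Running the same argument over the rows in $I_j$ and using that $A$ is Hermitian yields $A_{I_i \times I_j} = c_{ij} \bu_i \bu_j^*$ for a unique scalar $c_{ij}$, which vanishes unless $\bu_i, \bu_j \ne \bZ$; this also proves the uniqueness of $C$. Hence $A = U C U^*$ where $U$ is the $N \times m$ matrix, block-diagonal with $j$-th column block $\bu_j$; deleting the zero row-and-column blocks (those with $\bu_j = \bZ$, which are also zero in $C$) leaves $U$ with full column rank, so $A$ and $C$ have equal rank. For a set $S$ of indices with all $\bu_j \ne \bZ$, the submatrix $C_S$ is $*$-congruent, via an invertible diagonal matrix built from the relevant entries of the $\bu_j$, to the principal submatrix of $A$ on one representative index per block of $S$; hence $\det C_S \ge 0$ whenever $|S| \le k$, while any principal submatrix of $C$ meeting a zero block has a zero row, so $C$ is $k$-PMP. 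Its diagonal entries are $0$ or $1$, and $|c_{ij}| \le 1$ by the non-negative $2 \times 2$ minor $c_{ii}c_{jj} - |c_{ij}|^2$, so the entries of $C$ lie in $\overline{D}(0,1)$. Finally, if $c_{ij} \ne 0$ then hypothesis~(b) forces $\bu_i$ and $\bu_j$ to have no zero components, and the ratio of any two entries of $A_{I_i \times I_j}$ is a ratio of components of $\bu_i$ times the conjugate of a ratio of components of $\bu_j$; each lies in $G$, since ratios of non-zero entries within a single diagonal block lie in $G$ and $G$ absorbs the conjugate through the relation $\overline{g} = \rho g^{-1}$ with $\rho, g \in G$. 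Thus the off-diagonal blocks of $A$ also lie in a single $G$-orbit.

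For part~(4), if a diagonal block $A_I$ has all entries in one $G$-orbit with $G \subset S^1$, then either that orbit is $\{0\}$, whence $A_I = 0$, or all entries of $A_I$ share a modulus $r > 0$; in the latter case $r^{-1} A_I$ is $3$-PMP with all entries of modulus $1$, so by Theorem~\ref{TnewHNS} it is similar, via a unitary monomial matrix, to a direct sum of all-ones and zero blocks, which must reduce to a single all-ones block since no entry is zero, and therefore $A_I$ has rank one. For parts~(2) and~(3), the construction of the maximal partition and its uniqueness up to relabelling go through exactly as in \cite[Section~5]{BGKP-fixeddim}: those arguments use only that each candidate block has rank at most one with entries in a single $G$-orbit (now re-established) together with the fact that two such blocks $A_I$, $A_{I'}$ with $A_{I \cap I'}$ non-zero may be merged, $A_{I \cup I'}$ again being forced to have rank at most one by the $3 \times 3$ identity above. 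For part~(3), with $G = \C^\times$: if $|c_{ij}| = 1$ then $c_{ii} = c_{jj} = 1$, so $\bu_i, \bu_j \ne \bZ$ and (by hypothesis~(b)) have no zero components, and $A_{I_i \cup I_j} = \bw \bw^*$ where $\bw$ is obtained by stacking $\bu_i$ above $\overline{c_{ij}}\, \bu_j$; this matrix has no zero entry, so $I_i$ is properly contained in a rank-one single-$\C^\times$-orbit block, contradicting the maximality~(c), whence $|c_{ij}| < 1$.

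The main obstacle is the first substitution: because a $k$-PMP matrix satisfies only the $2$-PSRP, one cannot invoke the PSRP step of \cite{BGKP-fixeddim} as a black box and must instead establish the $3 \times 3$ determinant identity above and check that it yields precisely the column-space information needed. A secondary point requiring care is confirming that the combinatorial heart of part~(2) --- the existence and uniqueness of the maximal partition --- depends only on the rank-one-plus-single-orbit structure, now re-derived, and not on any additional consequence of positive semidefiniteness buried in the original proof.
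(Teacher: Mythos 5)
Your proposal is correct and follows the same plan the paper gestures at: adapt the proof of Theorem~\ref{Tgroup} from \cite{BGKP-fixeddim}, replacing the appeal to Theorem~\ref{Thns} in part~(4) by Theorem~\ref{TnewHNS}, and checking separately that $C$ inherits the $k$-PMP property. The paper's own proof is essentially a one-line pointer; what you add is the concrete mechanism by which ``using the $3$-PMP property'' replaces the positive-semidefinite input, namely the identity
\[
\det\begin{pmatrix} |\alpha|^2 & \alpha\overline{\beta} & a_{pq} \\ \overline{\alpha}\beta & |\beta|^2 & a_{p'q} \\ \overline{a_{pq}} & \overline{a_{p'q}} & a_{qq} \end{pmatrix} = -\bigl|\alpha\, a_{p'q} - \beta\, a_{pq}\bigr|^2,
\]
which I checked is correct (the $a_{qq}$ term cancels) and which, together with non-negativity of the $3\times 3$ minor, does force every column of $A_{I_i\times I_j}$ to be a multiple of $\bu_i$. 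This is exactly the right substitute for the PSRP/Schur-complement step that a $k$-PMP matrix need not satisfy (as your own Theorem~\ref{Tpsrp2} citation correctly flags). Your verifications that $C$ is $k$-PMP via $*$-congruence with a diagonal matrix of representative $\bu_j$-entries, that the off-diagonal blocks lie in a single $G$-orbit (using closure of $G$ under the conjugate, deducible from the Hermitian symmetry of the diagonal blocks), and that part~(3) follows from a maximality contradiction when $|c_{ij}|=1$, are all sound. In short: same approach as the paper, but you supply the proof the paper leaves to the reader, with the $3\times3$ determinant identity as the key added ingredient.
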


\begin{proof}
All assertions but the last follow as in the proof of
Theorem~\ref{Tgroup} given in \cite{BGKP-fixeddim}, using the $3$-PMP
property and Theorem~\ref{TnewHNS} in place of Theorem~\ref{Thns}
for~(4). That $C$ inherits the $k$-PMP property from $A$ is readily
verified.
\end{proof}

We will refer to the analogous statement to Theorem~\ref{Tgroup}(1)
provided by Theorem~\ref{Tpmpstrata} as Theorem~\ref{Tpmpstrata}(1),
and similarly for the other parts of these theorems.

\begin{definition}
Suppose $G \subset \C^\times$ is a multiplicative subgroup and
$A \in \C^{N \times N}$ is $k$-PMP, where $3 \le k \le N$. The
partition given by Theorem~\ref{Tpmpstrata}(2) is
denoted~$\pi^G( A )$.
\end{definition}

Theorem~\ref{Tpmpstrata} immediately leads to stratifications of the
$k$-PMP matrices, for $3 \leq k \leq N$. These stratifications respect
the natural inclusions, so that $\pi^G( A )$ is independent of $k$.

This common Schubert cell-type stratification in fact holds more
generally, for all Hermitian matrices.

\begin{proposition}\label{Pexists}
Fix a multiplicative subgroup $G \subset \C^\times$, an
integer $N \geq 1$, and a Hermitian matrix $A \in \C^{N \times N}$. There
exists a coarsest partition
$\pi_{\min}( A ) = \{ I_1, \dots, I_m \} \in \Pi_N$, such that the
entries of the block submatrix $A_{I_i \times I_j}$ lie in
a single $G$-orbit, for all $i$, $j \in \{ 1, \dots, m \}$.
This partition is unique up to relabelling of the indices.

If, moreover, $A$ is $3$-PMP and $G \subset S^1$, then
$\pi_{\min}( A ) = \pi^G( A )$.
\end{proposition}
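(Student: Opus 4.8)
The plan is to prove the two assertions in turn. For the first, let $\mathcal{V}$ be the set of those partitions $\pi = \{ I_1, \dots, I_m \} \in \Pi_N$ for which the entries of $A_{I_i \times I_j}$ lie in a single $G$-orbit for all $i, j \in \{1, \dots, m\}$. This set is nonempty, since the partition of $\{1, \dots, N\}$ into singletons belongs to it. I would then show that $\mathcal{V}$ is closed under the meet operation of the lattice $\Pi_N$, where $\pi_1 \wedge \pi_2$ is the finest common coarsening of $\pi_1$ and $\pi_2$, i.e. the partition whose blocks are the connected components of the graph on $\{1, \dots, N\}$ that has an edge between $i$ and $j$ precisely when $i$ and $j$ share a block of $\pi_1$ or a block of $\pi_2$. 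Granting this closure, $\mathcal{V}$ being finite the partition $\pi_{\min}(A) := \bigwedge_{\pi \in \mathcal{V}} \pi$ again lies in $\mathcal{V}$, and by the defining property of the meet every member of $\mathcal{V}$ is a refinement of it; hence $\pi_{\min}(A)$ is the coarsest partition with the stated property, and it is unique as a set partition.

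The crux of the first assertion is the closure of $\mathcal{V}$ under $\wedge$. Given $\pi_1, \pi_2 \in \mathcal{V}$, set $\pi := \pi_1 \wedge \pi_2$, fix blocks $K$ and $L$ of $\pi$, and take entries $a_{ij}$ and $a_{i'j'}$ of $A$ with $i, i' \in K$ and $j, j' \in L$. Because $K$ is a connected component of the graph above, there is a chain $i = i_0, i_1, \dots, i_p = i'$ inside $K$ in which each consecutive pair shares a block of $\pi_1$ or of $\pi_2$; likewise there is such a chain $j = j_0, \dots, j_q = j'$ inside $L$. At each step of the first chain, $i_r$ and $i_{r+1}$ lie in a common block of (say) $\pi_1$, while $j$ lies in some block of $\pi_1$, so $a_{i_r j}$ and $a_{i_{r+1} j}$ both occur in one block submatrix of $A$ relative to $\pi_1$ and hence lie in the same $G$-orbit; chaining these equalities of orbits shows that $a_{ij}$ and $a_{i'j}$ lie in one $G$-orbit, and chaining along the second chain shows the same for $a_{i'j}$ and $a_{i'j'}$. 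As ``lying in a common $G$-orbit'' is an equivalence relation, $a_{ij}$ and $a_{i'j'}$ lie in one $G$-orbit; so all entries of $A_{K \times L}$ do, whence $\pi \in \mathcal{V}$.

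For the second assertion, suppose $A$ is $3$-PMP and $G \subseteq S^1$. The partition $\pi^G(A)$, being the one furnished by Theorem~\ref{Tpmpstrata}(2), satisfies conditions~(a), (b), and~(c); in particular it satisfies~(a) and~(b), so Theorem~\ref{Tpmpstrata}(1) applies and shows that the off-diagonal blocks of $A$ with respect to $\pi^G(A)$, not merely the diagonal ones, have all their entries in a single $G$-orbit. Thus $\pi^G(A) \in \mathcal{V}$, so $\pi^G(A)$ is a refinement of $\pi_{\min}(A)$. If these two partitions were not equal, some block $K$ of $\pi_{\min}(A)$ would be a union of at least two blocks of $\pi^G(A)$; choose two distinct such blocks $I_s, I_t \subseteq K$. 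Since $K$ is a block of $\pi_{\min}(A) \in \mathcal{V}$, all entries of $A_{K \times K}$, and therefore all entries of the principal submatrix $A_{I_s \cup I_t}$, lie in a single $G$-orbit. Because $A$ is $3$-PMP and $G \subseteq S^1$, Theorem~\ref{Tpmpstrata}(4) then forces $A_{I_s \cup I_t}$ to have rank at most one. But $I_s$ is properly contained in $I_s \cup I_t$, whose principal submatrix has rank at most one, which contradicts the maximality condition~(c) enjoyed by $\pi^G(A)$. Hence $\pi_{\min}(A) = \pi^G(A)$.

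I expect the second assertion to be the main obstacle. A priori $\pi_{\min}(A)$ can be strictly coarser than $\pi^G(A)$; this already occurs for larger groups such as $G = \C^\times$, where two rank-one diagonal blocks of a matrix in $\cP_N(\C)$ linked by an off-diagonal block with no zero entries may be merged inside $\mathcal{V}$ even though their union has rank greater than one. The argument must therefore exploit the hypothesis $G \subseteq S^1$ essentially, and it does so through Theorem~\ref{Tpmpstrata}(4): under that hypothesis ``entries in a single $G$-orbit'' upgrades to ``rank at most one'', which is exactly the statement needed to collide with the maximality built into $\pi^G(A)$. Pinpointing this mechanism, rather than carrying out the routine lattice bookkeeping of the first assertion, is where the real content lies.
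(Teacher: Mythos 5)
Your proof is correct and follows essentially the same strategy as the paper: the first assertion is handled identically, via closure of the set of admissible partitions under the lattice meet together with the path-chaining argument along connected components. For the second assertion, you arrive at the same conclusion using the same ingredients (Theorem~\ref{Tpmpstrata}(1) to get $\pi^G(A)\in\mathcal{V}$, and Theorem~\ref{Tpmpstrata}(4) together with maximality condition~(c) for the reverse inclusion), merely phrasing the reverse direction as an explicit contradiction rather than the paper's direct observation that $\pi_{\min}(A)$ satisfies~(a) and~(b) and therefore refines~$\pi^G(A)$.
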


Note that $3$-PMP matrices differ from Hermitian matrices
which are not $3$-PMP, in that even for $G \subset S^1$, every block
given by the partition $\pi_{\min}$ can have rank greater than $1$. For
example, the following Hermitian matrix $A$ is $2$-PMP but not
$3$-PMP, and for $G = \{ 1,-1 \}$ we have that
$\pi_{\min}( A ) = \{ \{ 1, 2, 3 \}, \{ 4, 5, 6 \} \}$, with all four
blocks of $A$ non-singular:
\[
A = \begin{pmatrix}
\hphantom{-}2 & 2 & -2 & \hphantom{-}1 & 1 & -1\\
\hphantom{-}2 & 2 & \hphantom{-}2 &  \hphantom{-}1 & 1 & \hphantom{-}1\\
-2 & 2 &  \hphantom{-}2 & -1 & 1 &  \hphantom{-}1\\
\hphantom{-}1 & 1 & -1 &  \hphantom{-}2 & 2 & -2\\
\hphantom{-}1 & 1 & \hphantom{-}1 & \hphantom{-}2 & 2 & \hphantom{-}2\\
-1 & 1 &  \hphantom{-}1 & -2 & 2 & \hphantom{-}2
\end{pmatrix}.
\]

\begin{proof}[Proof of Proposition~\ref{Pexists}]
The non-trivial part is to establish uniqueness. To do so, we first
claim that if $\pi_1$, $\pi_2 \in \Pi_N$ satisfy the property in the
assertion, then so does their meet $\pi_1 \wedge \pi_2$; note this
gives uniqueness, since minimal $\pi_1$ and $\pi_2$ are
such that $\pi_1 = \pi_1 \wedge \pi_2 = \pi_2$.

To show the claim, first recall that the meet $\pi_1 \wedge \pi_2$ can
be constructed as follows:
connect vertices $i$, $i' \in \{ 1, \dots, N \}$ by an edge if they
lie in the same block of $\pi_1$ or $\pi_2$; this defines a graph
whose connected components are the blocks of the partition
$\pi_1 \wedge \pi_2$. Denote this equivalence relation
by $i \sim i'$ in~$\pi_1 \wedge \pi_2$.

Now suppose $i \sim i'$ and $j \sim j'$ in $\pi_1 \wedge \pi_2$, so
there are paths joining them, each of whose vertices lies in a block
of $\pi_1$ or $\pi_2$. Denote these paths by
\[
i = i_0 \leftrightarrow i_1 \leftrightarrow \dots \leftrightarrow i_r =
i' \quad \text{and} \quad
j = j_0 \leftrightarrow j_1 \leftrightarrow \dots \leftrightarrow j_s =
j'.
\]
We claim that $a_{i j} \in G \cdot a_{i' j'}$. Indeed, using the above
paths,
\[
a_{i j} = a_{i_0 j} \in G a_{i_1 j} = G a_{i_2 j} = \dots = %
G a_{i_r j} = G a_{i' j_0} = G a_{i' j_1} = \dots = G a_{i' j'}.
\]
This proves the claim, and hence all but the last assertion.

If $A$ is $3$-PMP and $G \subset S^1$, then Theorem~\ref{TnewHNS}
gives that $\pi_{\min}( A )$ satisfies properties (a) and (b) of
Theorem~\ref{Tpmpstrata}. Hence $\pi_{\min}( A )$ is finer than
$\pi^G( A )$, by property (c). Conversely, by
Theorem~\ref{Tpmpstrata}(1), each block of $A$ given by $\pi^G( A )$
lies in a single $G$-orbit, so $\pi^G( A )$ is finer
than~$\pi_{\min}( A )$. Thus $\pi_{\min}( A ) = \pi^G( A )$, as
required.
\end{proof}

The preceding result allows us to extend the Schubert cell-type
stratification to all~$N \times N$ Hermitian matrices.

\begin{definition}
Given $N \geq 1$, $G \subset \C^\times$, and a Hermitian matrix
$A \in \C^{N \times N}$, define the partition $\pi^G( A ) \in \Pi_N$
to be $\pi_{\min}( A )$, as in Proposition~\ref{Pexists}.
Furthermore, given $\pi \in \Pi_N$, let
\[
\stratumsymb^G_\pi := 
\{ \text{Hermitian } A \in \C^{N \times N} : \pi^G( A ) = \pi \}.
\]
\end{definition}

Once again, we have that
$\overline{\stratumsymb^G_\pi} =
\bigsqcup_{\pi' \prec \pi} \stratumsymb^G_{\pi'}$ for all $\pi \in \Pi_N$
and $G \subset \C^\times$, as above.

\begin{remark}
Let $N \ge 3$. The matrix
\[
A = \begin{pmatrix} 1 & 2 & 0 \\ 2 & 8 & 0 \\ 0 & 0 & \Id_{N-2}
\end{pmatrix}
\]
is positive semidefinite, so $3$-PMP. If
$G = \langle 2 \rangle = \{ 2^n : n \in \Z \}$, then
$\pi_{\min}( A ) = \{ \{ 1, 2 \}, \{ 3 \} \}$ and the diagonal block
$A_{\{ 1, 2 \}}$ has rank~$2$. Thus $\pi_{\min}( A ) \neq \pi^G( A )$
and the final part of Proposition~\ref{Pexists} has no immediate
extension beyond the case where $G \subset S^1$.
\end{remark}

\section{The simultaneous kernels of Hadamard powers of $3$-PMP
matrices}\label{Ssimul}

Having understood the Schubert cell-type stratification of the class of
$3$-PMP matrices, our aim below is to compute the simultaneous kernel of
the entrywise powers of any $3$-PMP matrix. This extends
\cite[Theorem~5.7]{BGKP-fixeddim}, which was obtained for positive
semidefinite matrices only. 

\begin{theorem}\label{T3pmp}
Let the Hermitian matrix $A \in \C^{N \times N}$
be $3$-PMP, and let $\pi' = \{ I'_1, \dots, I'_{m'} \}$ be any
partition refined by
$\pi = \pi^{\{ 1 \}}( A ) = \{ I_1, \dots, I_m \}$. The following
spaces are equal.
\begin{enumerate}
\item[\tu{(1)}] The simultaneous kernel of $\one{N}$, $A$, \dots,
$A^{\circ ( N - 1 )}$.

\item[\tu{(2)}] The simultaneous kernel of $A^{\circ n}$ for all
$n \geq 0$.

\item[\tu{(3)}] The simultaneous kernel of the block-diagonal matrices
\smash[b]{$\diag A_{\pi'}^{\circ n} :=
\oplus_{j = 1}^{m'} A^{\circ n}_{I'_j \times I'_j}$}
for all $n \geq 0$.

\item[\tu{(4)}] The kernel of
$J_\pi := \oplus_{j = 1}^m \one{I_j}$.
\end{enumerate}
However, this equality of kernels need not hold for matrices that are
not $3$-PMP.
\end{theorem}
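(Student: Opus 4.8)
The proof will establish a cycle of inclusions, or rather a chain of equalities, running roughly as $(4) \subseteq (3) \subseteq (2) \subseteq (1)$ followed by $(1) \subseteq (4)$, and then separately exhibit a non-$3$-PMP counterexample. The strategy mirrors the proof of Theorem~\ref{Tsimul} from \cite{BGKP-fixeddim}, but the input is now Theorem~\ref{Tpmpstrata} in place of Theorem~\ref{Tgroup}. First I would reduce to the partition $\pi = \pi^{\{1\}}(A) = \{I_1,\dots,I_m\}$: by Theorem~\ref{Tpmpstrata}(1) applied with $G = \{1\}$, the matrix $A$ has block structure $A_{I_i \times I_j} = c_{ij}\bu_i\bu_j^*$ for vectors $\bu_j$ whose entries are \emph{positive reals} (this is the force of taking $G$ trivial on a $3$-PMP matrix: each diagonal block has rank $\le 1$ with entries in a single $\{1\}$-orbit, hence constant, hence a positive rank-one block). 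Consequently every Hadamard power $A^{\circ n}$ has block $(c_{ij}^n)\,(\bu_i^{\circ n})(\bu_j^{\circ n})^*$, so $A^{\circ n} = D_n (C^{\circ n} \otimes \text{blocks}) D_n^*$ in an appropriate sense; crucially $\ker A^{\circ n}$ decomposes compatibly with the partition $\pi$.

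\textbf{The easy inclusions.} The inclusion $(4) \subseteq (3)$ is immediate once one checks that $\ker J_\pi = \bigoplus_j \ker \one{I_j}$ is killed by each diagonal block $A^{\circ n}_{I'_j \times I'_j}$: since $\pi$ refines $\pi'$, each block $I'_j$ is a union of blocks $I_i$ of $\pi$, and on such a union $A^{\circ n}$ restricted to $I'_j$ again has the rank-structured form above, whose kernel contains $\bigoplus_{I_i \subseteq I'_j}\ker\one{I_i}$. The inclusion $(3) \subseteq (2)$ needs the observation that $\ker\bigl(\diag A_{\pi'}^{\circ n}\bigr) \subseteq \ker A^{\circ n}$: if $\bv$ is annihilated by every diagonal block then $\bv^* A^{\circ n}\bv = 0$ — but here one must be slightly careful, since $A^{\circ n}$ is positive semidefinite only when $2n$ is large enough or not at all in general. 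The cleaner route is: $\diag A_{\pi'}^{\circ n}$ is a principal-block-diagonal part, and one uses that $A^{\circ n}$ restricted to each $I'_j$ has column space spanned by the $\bu_i^{\circ n}$ with $I_i \subseteq I'_j$ (a rank computation), so annihilating all diagonal blocks forces $\bv$ to be orthogonal to every $\bu_i^{\circ n}$, hence in $\ker A^{\circ n}$. Finally $(2) \subseteq (1)$ is trivial since $(1)$ intersects fewer kernels.

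\textbf{The main inclusion and the obstacle.} The crux is $(1) \subseteq (4)$: if $\bv$ lies in $\ker\one{N} \cap \ker A \cap \dots \cap \ker A^{\circ(N-1)}$, then $\bv \in \ker J_\pi$. Writing $\bv = (\bv_1,\dots,\bv_m)$ along the blocks of $\pi$, the condition $A^{\circ n}\bv = 0$ for $n = 0,1,\dots,N-1$ unpacks, block by block, into a system of the form $\sum_{j} c_{ij}^n \langle \bu_j^{\circ n}, \bv_j\rangle \,\bu_i^{\circ n} = 0$. The Vandermonde-type argument from \cite{BGKP-fixeddim} — exploiting that there are at most $N$ distinct pairs $(c_{ij}, \bu_j^{\circ\bullet})$ and $N$ available exponents — then forces $\langle \bu_j^{\circ n}, \bv_j\rangle = 0$ for all the relevant $n$, and in particular $\langle \bu_j, \bv_j\rangle = 0$ with $\bu_j$ a positive vector, which is exactly the statement $\bv_j \in \ker\one{I_j}$, i.e.\ $\bv \in \ker J_\pi$. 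This Vandermonde/linear-independence step is the main obstacle to get right, because one must confirm that the combinatorial bound $N$ on the number of exponents genuinely suffices when $A$ is merely $3$-PMP rather than PSD; fortunately Theorem~\ref{Tpmpstrata} guarantees exactly the same block normal form (with $C$ now $k$-PMP, but that is irrelevant here since only the scalar entries $c_{ij}$ and the positivity of the $\bu_j$ are used), so the original argument transfers verbatim. For the last sentence of the theorem, I would exhibit a small Hermitian matrix that is $2$-PMP but not $3$-PMP — for instance the $3\times 3$ matrix $\left(\begin{smallmatrix} 1 & 1 & -1 \\ 1 & 1 & 1 \\ -1 & 1 & 1\end{smallmatrix}\right)$ displayed after Theorem~\ref{TnewHNS} — and compute directly that $\bigcap_{n\ge 0}\ker A^{\circ n}$ differs from $\bigcap_{n=0}^{2}\ker A^{\circ n}$ (or from the kernel of any $J_\pi$), since for this matrix the Hershkowitz--Neumann--Schneider normal form fails and the rank-one block decomposition underpinning parts (1)--(4) is simply unavailable.
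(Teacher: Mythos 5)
Your overall skeleton---reduce to the compressed matrix $B = \Sigma^\downarrow_\pi(A)$ via the block-constant structure supplied by Theorem~\ref{Tpmpstrata} with $G = \{1\}$, then run a Vandermonde-type argument---is indeed the paper's strategy, and closing the cycle $(4)\subseteq(3)\subseteq(2)\subseteq(1)\subseteq(4)$ would be a legitimate reorganization that dispenses with \cite[Lemma~3.5]{BGKP-fixeddim}. However, three of your steps have genuine gaps.

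First, your ``cleaner route'' to $(3)\subseteq(2)$ asserts that $A^{\circ n}$ restricted to $I'_j$ has column space spanned by the vectors $\bu_i^{\circ n}$ with $I_i \subseteq I'_j$. This is not a rank computation you can do for free: the column space of the diagonal block is the image of $\col\bigl(B^{(j),\circ n}\bigr)$ under the block-indicator embedding, which is only \emph{contained in} the span of those indicator vectors, with equality iff $B^{(j),\circ n}$ is nonsingular. Your inference ``annihilating the diagonal blocks forces $\bv \perp \bu_i^{\circ n}$'' is exactly a PSRP-type statement about $A^{\circ n}$, and $A^{\circ n}$ is merely $3$-PMP, not positive semidefinite; Theorem~\ref{Tpsrp2} shows the PSRP can fail arbitrarily badly in that generality, so this step needs a different justification. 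The paper avoids the issue by proving $(4)\subseteq(2)$ and $(4)\subseteq(3)$ directly (trivial, since each $A^{\circ n}$ is block-constant with respect to $\pi$), and then obtaining the reverse inclusions $(2)\subseteq(4)$ and $(3)\subseteq(4)$ from the Vandermonde argument applied to $B$ and to each $B^{(j)}$.

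Second, and more seriously, your treatment of $(1)\subseteq(4)$ claims the Vandermonde argument from \cite{BGKP-fixeddim} ``transfers verbatim'' since only the block normal form and positivity of the $\bu_j$ matter. This misses where the $3$-PMP hypothesis does new work, which is the crux of the paper's proof: after sorting the diagonal of the compression $B$ in non-increasing order, one shows $b_{ii}\neq b_{ij}$ for $i<j$, using the $3$-PMP chain
$b_{ii}^2 \ge b_{ii}b_{jj} = a_{pp}a_{qq} \ge |a_{pq}|^2 = b_{ij}^2$;
equality would force $b_{ii}=b_{ij}=b_{ji}=b_{jj}$ and contradict the maximality of $\pi^{\{1\}}(A)$. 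Only this distinctness makes the inductive Vandermonde step (compress the first row of $B$ to its $k$ distinct entries; the first $k$ Hadamard powers of that compressed row form a nonsingular Vandermonde system, giving $u_1=0$) close, and hence gives $\bigcap_{n=0}^{m-1}\ker B^{\circ n}=\{\bZ\}$. Your sketch omits this lemma entirely, so it does not actually establish the reverse inclusion.

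Third, your proposed counterexample to the final claim is wrong: the matrix $\bigl(\begin{smallmatrix}1&1&-1\\1&1&1\\-1&1&1\end{smallmatrix}\bigr)$ has determinant $-4$, hence is nonsingular, so all four spaces in the theorem equal $\{\bZ\}$ and no discrepancy can appear. The paper instead takes the $0$-$1$ tridiagonal Toeplitz matrix $T_N$ with $N=3k+2$: it is $2$-PMP but not $3$-PMP, $\pi^{\{1\}}(T_N)$ consists of singletons so $\ker J_\pi=\{\bZ\}$, yet the non-zero vector $(1,-1,0,1,-1,0,\dots,1,-1)^T$ lies in $\ker\one{N}$ and in $\ker T_N^{\circ n}$ for all $n\ge 1$.
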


\begin{proof}
That (1) and (2) describe the same subspace follows from
\cite[Lemma~3.5]{BGKP-fixeddim}, which gives
that~$A^{\circ M} = \sum_{j = 0}^{N - 1} D_{M, j}( A ) \, A^{\circ j}$
for certain matrices $D_{M, j}( A )$ and any $M \geq 0$.
We now show equality of the subspaces~(2) and~(4); the same argument,
\emph{mutatis mutandis}, implies that~(2) and~(3) are equal as well.

Note first that $\bv = ( v_1, \dots, v_N )^T \in \ker J_\pi$
if and only if $\sum_{i \in I_j} v_i = 0$ for all
$j \in \{ 1, \dots, m \}$. Since
$A^{\circ n}$ is constant on each block of the form $I_i \times I_j$,
it follows that $\ker J_\pi \subseteq \ker A^{\circ n}$ for
all~$n \geq 0$.

To obtain the reverse inclusion, let $B \in \C^{m \times m}$ denote
the \emph{compression} of $A$,
\begin{equation}\label{Ecompress}
B := \Sigma^\downarrow_\pi(A),
\end{equation}
so that $b_{i j}$ equals the constant value taken in the block
$A_{I_i \times I_j}$. Let $\bv \in \cap_{n \geq 0} \ker A^{\circ n}$ and
write~$\bw := (w_1, \dots, w_m)^T$, with
$w_j := \sum_{i \in I_j} v_i$ for all $j$. Then $\bv \in \ker J_\pi$
if and only if $\bw = \bZ$.
Since $A^{\circ n} \bv = \bZ$ for all $n$, it follows that
$B^{\circ n} \bw = \bZ$ for all $n \geq 0$. Thus the
result follows if~$\cap_{n \ge 0} \ker B^{\circ n} = \{ 0 \}$.

If $( p, q ) \in I_i \times I_j$ then $b_{i j} = a_{p q}$; in
particular, $b_{i i} \geq 0$, and we may assume without loss of
generality that the diagonal entries of $B$ are in non-increasing
order. Supposing for contradiction that $b_{i i} = b_{i j}$, where
$j > i$, the $3$-PMP property for~$A$ gives that
\[
b_{i i}^2 \geq b_{i i} b_{j j} = a_{p p} a_{q q} \geq %
| a_{p q} |^2 = | b_{i j} |^2 = b_{i j}^2 = b_{i i}^2,
\]
so $b_{i i} = b_{i j} = b_{j i} = b_{j j}$. This shows that $B$ is
constant on the block $\{ i, j \} \times \{ i, j \}$, which contradicts
the choice of~$\pi$, by Theorem~\ref{Tpmpstrata}. Hence we have that
\[
b_{i i} \neq b_{i j} \qquad \text{whenever } 1 \leq i < j \leq m.
\]
We claim this implies that
\[
\bigcap_{n = 0}^{m - 1} \ker B^{\circ n} = \{ \bZ \},
\]
which will establish the desired conclusion.

We proceed by induction on $m$, with the base case $m = 1$ being
clear, since $B^{\circ 0} = (1)$. Suppose~$\bu = ( u_j ) \in \C^m$ is
such that $B^{\circ n} \bu = \bZ$ whenever $0 \leq n \leq m-1$. If we
can show that~$u_1 = 0$, then the result follows by a standard
induction argument.

It suffices to assume that $\bu$ is annihilated by $\br^{\circ 0}$,
\dots, $\br^{\circ (m-1)}$, where $\br := ( b_{11}, \dots, b_{1m} )$
is the first row of $B$. Let $\bs := ( s_1, \dots, s_k )$ be a
\emph{compression} of $\br$, which contains each distinct entry in
$\br$ exactly once, and has $s_1 = b_{11}$. Suppose
$\{ 1, \dots, m \} = I''_1 \sqcup \cdots \sqcup I''_k$ is the
corresponding partition, so that
\[
s_j = b_{1i} \quad \iff \quad i \in I''_j \qquad ( j = 1, \dots, k ),
\]
and note that $I''_1 = \{ 1 \}$. If $v_j := \sum_{i \in I'_j} u_i$,
then the vector $\bv = ( v_1, \dots, v_k )^T$ is annihilated
by~$\bs^{\circ 0}$, \dots, $\bs^{\circ (m - 1)}$. The first $k$ of
these are linearly independent, as they form a Vandermonde matrix, so
$\bv = \bZ$. Hence $u_1 = v_1 = 0$, as required.

Finally, to see that the inclusion between the kernels in (4) and (2)
is not always an equality, consider the Toeplitz tridiagonal matrix
$T_N$, with $( i , j )$ entry equal to~$1$ if $| i - j | \leq 1$ and
equal to~$0$ otherwise, for $N \ge 3$. This matrix is readily seen to
be $2$-PMP but not $3$-PMP. Note
that~$\pi = \pi^{\{ 1 \}}( T_N ) = \{ \{ 1 \}, \dots, \{ N \} \}$,
so $\ker J_\pi = \{ 0 \}$. However, if $N = 3 k + 2$ for
some~$k \geq 1$, then
$( 1, -1, 0, 1, -1, 0, \dots, 1, -1 )^T$ lies in~$\ker \one{N}$ and
$\ker T_N^{\circ n}$ for all~$n \geq 1$.
\end{proof}

\begin{remark}
A more general objective is the computation of the simultaneous
kernel of the set of Hadamard powers $\{ A^{\circ n} : n \geq 0 \}$,
where $A \in \F^{M \times N}$ is a linear map between
finite-dimensional vector spaces over a 
field~$\F$. Clearly this reduces to finding the simultaneous kernels of
the Hadamard powers of each row~$\bu^T$ of~$A$. There exists a unique
coarsest partition $\{ I_1, \dots, I_m \} \in \Pi_N$ such that the
entries in~$\bu$ are equal within each block~$I_j$, for all
rows~$\bu^T$ of~$A$. Working as in the proof of Theorem~\ref{T3pmp}
shows that
\[
\bigcap_{\bu^T} \bigcap_{n \geq 0} \ker ( \bu^T )^{\circ n} =
\bigoplus_{j = 1}^m \ker \onesymb_{1 \times I_j}.
\]
This provides a recipe to compute the simultaneous kernel of the
Hadamard powers of an arbitrary rectangular matrix over any field.
The Toeplitz counterexample~$T_N$ in the proof of Theorem~\ref{T3pmp}
shows that, in order to give a more precise description of the
simultaneous kernel, additional assumptions are required, such
as being $3$-PMP.
\end{remark}

In light of this remark, it is noteworthy that the proof of
Theorem~\ref{T3pmp} gives the following result.

\begin{proposition}
Let $\F$ be an arbitrary field, and suppose $A \in \F^{N \times N}$ is
such that
\[
a_{i i} \neq a_{i j} \qquad \text{whenever } 1 \le i < j \le N.
\]
Then $\cap_{n = 0}^{N - 1} \ker A^{\circ n} = \{ \bZ \}$.
\end{proposition}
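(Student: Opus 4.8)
The plan is to extract exactly the combinatorial core of the proof of Theorem~\ref{T3pmp} and run it verbatim, since the hypothesis $a_{ii} \neq a_{ij}$ for $i < j$ is precisely the property called ``$b_{ii} \neq b_{ij}$'' that was isolated there. First I would observe that no positivity is used in that portion of the argument: the entire reduction from $\cap_{n=0}^{m-1} \ker B^{\circ n} = \{\bZ\}$ is purely a statement about Vandermonde determinants, which are invertible over any field once the relevant entries are distinct. So the proposition is obtained by replacing $B$ with $A$, $m$ with $N$, and simply noting that the $3$-PMP input was only ever invoked to guarantee $b_{ii} \neq b_{ij}$, which is now a hypothesis.

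The key steps, in order: (i) proceed by induction on $N$, with $N = 1$ trivial since $A^{\circ 0} = (1)$; (ii) assume $\bu = (u_j) \in \F^N$ satisfies $A^{\circ n}\bu = \bZ$ for $0 \le n \le N-1$, and reduce to showing $u_1 = 0$, after which the inductive hypothesis applied to the trailing $(N-1)\times(N-1)$ principal submatrix (which still has distinct diagonal/off-diagonal entries in each row) finishes the argument; (iii) to show $u_1 = 0$, restrict attention to the first row $\br = (a_{11}, \dots, a_{1N})$, which annihilates $\bu$ in each Hadamard power $\br^{\circ 0}, \dots, \br^{\circ(N-1)}$; (iv) form the compression $\bs = (s_1, \dots, s_k)$ of $\br$ listing each distinct value once with $s_1 = a_{11}$, with associated partition $I''_1 = \{1\}, I''_2, \dots, I''_k$, and set $v_j := \sum_{i \in I''_j} u_i$; (v) then $\bv = (v_1, \dots, v_k)^T$ is annihilated by $\bs^{\circ 0}, \dots, \bs^{\circ(k-1)}$, whose coefficient matrix is a $k \times k$ Vandermonde matrix in the distinct entries $s_1, \dots, s_k$, hence invertible over $\F$, forcing $\bv = \bZ$; (vi) since $s_1 = a_{11} \neq a_{1j}$ for all $j > 1$ by hypothesis, $I''_1 = \{1\}$ really is a singleton, so $u_1 = v_1 = 0$.

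The one point that needs a word of care — and the only place the field-independence is nontrivial — is step (v): over an arbitrary field, a Vandermonde matrix built from $s_1, \dots, s_k$ is invertible if and only if the $s_i$ are pairwise distinct, which holds here by construction of the compression. No notion of ordering, positivity, or characteristic is needed; the Vandermonde determinant $\prod_{i<j}(s_j - s_i)$ is nonzero precisely under distinctness. I do not expect a genuine obstacle, since this is a transcription of an argument already carried out in the excerpt; the only thing to get right is to state clearly that the hypothesis $a_{ii} \neq a_{ij}$ is inherited by every trailing principal submatrix (relabelling indices), which makes the induction go through, and to point out explicitly that the $3$-PMP hypothesis in Theorem~\ref{T3pmp} was used \emph{only} to establish this distinctness condition, so the present statement is strictly a generalization of that step.
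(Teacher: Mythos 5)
Your proposal is correct and matches the paper's intent exactly: the paper itself proves the proposition only by the remark that ``the proof of Theorem~\ref{T3pmp} gives the following result,'' and you have simply unpacked that inner argument (induction, reduction to the first coordinate via the first row, compression, Vandermonde non-vanishing over an arbitrary field), observing correctly that the $3$-PMP hypothesis was used only to establish the distinctness condition $a_{ii} \neq a_{ij}$ which is now assumed directly.
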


\begin{remark}
The operator $\Sigma^\downarrow_\pi( \cdot )$ in
Equation~(\ref{Ecompress}) has a number of interesting
properties. These features and their ramifications will be explored
in forthcoming work~\cite{BGKP-strata}.
\end{remark}

\subsection*{Acknowledgments}

The authors thank the International Centre for Mathematical Sciences,
Edinburgh, and Iowa State University, host organization of the 2017 ILAS
conference, for their hospitality, stimulating atmosphere, and excellent
working conditions. D.G. is partially supported by a University of
Delaware Research Foundation grant (UDRF GUILLOT 16-18), by a Simons
Foundation collaboration grant for mathematicians ($\#$526851), and by a
University of Delaware Research Foundation Strategic Initiative grant
(UDRF-SI GUILLOT 17-19). A.K. is partially supported by a Young
Investigator Award from the Infosys Foundation.

\end{document}